\theoremstyle{plain}
\newtheorem*{theorem*}{Theorem}
\newtheorem*{conjecture*}{Conjecture}
\newtheorem*{conjectureA*}{Conjecture A*}
\newtheorem{thm}{Theorem}[section]
\newtheorem{theorem}[thm]{Theorem}
\newtheorem{lemma}[thm]{Lemma}
\newtheorem*{lemma*}{Lemma}
\newtheorem{proposition}[thm]{Proposition}
\theoremstyle{definition}
\theoremstyle{remark}
\font\tencyr=wncyr10 \def\russe{\tencyr\cyracc}
\def\Sha{\text{\russe{Sh}}}
\DeclareMathOperator{\Gal}{Gal}
\DeclareMathOperator{\rank}{rank}
\DeclareMathOperator{\Selm}{Sel}
\DeclareMathOperator{\img}{img}
\DeclareMathOperator{\coker}{coker}
\DeclareMathOperator{\Hom}{Hom}
\DeclareMathOperator{\res}{res}
\DeclareMathOperator{\cor}{cor}
\DeclareMathOperator{\can}{can}
\DeclareMathOperator{\norm}{norm}
\newcommand{\Q}{{\mathbb{Q}}}
\newcommand{\Z}{{\mathbb{Z}}}
\newcommand{\C}{{\mathbb{C}}}
\newcommand{\Zp}{{\mathbb{Z}_p}}
\newcommand{\Qp}{{\mathbb{Q}_p}}
\newcommand{\ilim}{\mathop{\varprojlim}\limits}
\newcommand{\dlim}{\mathop{\varinjlim}\limits}
\newcommand{\Selinf}{{\Selm_{p^{\infty}}}}
\newcommand{\cE}{\mathcal{E}}
\newcommand{\cO}{\mathcal{O}}
\newcommand{\dotcup}{\ensuremath{\mathaccent\cdot\cup}}
\begin{document}

\title{On the $\Lambda$-cotorsion subgroup of the Selmer group}

\author{Ahmed Matar}

\address{Department of Mathematics\\
         University of Bahrain\\
         P.O. Box 32038\\
         Sukhair, Bahrain}
\email{amatar@uob.edu.bh}

\begin{abstract}
Let $E$ be an elliptic curve defined over a number field $K$ with supersingular reduction at all primes of $K$ above $p$. If $K_{\infty}/K$ is a $\Zp$-extension such that $E(K_{\infty})[p^{\infty}]$ is finite and $H^2(G_S(K_{\infty}), E[p^{\infty}])=0$, then we prove that the $\Lambda$-torsion subgroup of the Pontryagin dual of $\Selinf(E/K_{\infty})$ is pseudo-isomorphic to the Pontryagin dual of the fine Selmer group of $E$ over $K_{\infty}$. This is the Galois-cohomological analog of a flat-cohomological result of Wingberg.
\end{abstract}

\maketitle

\section{Introduction}
If $A$ is a Hausdorff, abelian locally-compact topological group we denote its Pontryagin dual by $A^*$. Let $\Gamma$ be a pro-$p$ group isomorphic to $\Zp$ and let $\Lambda=\Zp[[\Gamma]]$ be the completed group ring. If $A$ is a finitely generated $\Lambda$-module, we let $T_{\Lambda}(A)$ denote its $\Lambda$-torsion submodule. Also we let $\dot{A}$ be the $\Lambda$-module $A$ with the inverse $\Lambda$-action: $\gamma \cdot a=\gamma^{-1}a$ for $a \in A, \gamma \in \Gamma$. We denote $T_{\Lambda}(\dot{A})$ by $\dot{T}_{\Lambda}(A)$.

We now define the $p^{\infty}$-Selmer group and the fine $p^{\infty}$-Selmer group. Assume that $p$ is a prime, $F$ a number field and $E$ is an elliptic curve defined over $F$. Let $S$ be a finite set of primes of $F$ containing all the primes dividing $p$, all the primes where $E$ has bad reduction and all the archimedean primes. We let $F_S$ be the maximal extension of $F$ unramified outside $S$. Suppose now that $L$ is a field with $F \subseteq L \subseteq F_S$. We let $G_S(L)=\Gal(F_S/L)$ and $S_L$ be the set of primes of $L$ above those in $S$. We define the $p^{\infty}$-Selmer group of $E/L$ as

$$\displaystyle 0 \longrightarrow \Selinf(E/L) \longrightarrow H^1(G_S(L), E[p^{\infty}]) \longrightarrow \prod_{v \in S_L} H^1(L_v, E)[p^{\infty}]$$

Also we define the fine $p^{\infty}$-Selmer group of $E/L$ as

$$\displaystyle 0 \longrightarrow R_{p^{\infty}}(E/L) \longrightarrow H^1(G_S(L), E[p^{\infty}]) \longrightarrow \prod_{v \in S_L} H^1(L_v, E[p^{\infty}])$$

The goal of this paper is to prove the following result

\begin{theorem}\label{main_theorem}
Let $K$ be a number field, $E$ an elliptic curve defined over $K$ and $p$ a rational prime such that $E$ has good supersingular reduction at all primes of $K$ above $p$. Let $K_{\infty}/K$ be a $\Zp$-extension such that every prime of $K$ above $p$ ramifies and such that: (i) $E(K_{\infty})[p^{\infty}]$ is finite (ii) $H^2(G_S(K_{\infty}), E[p^{\infty}])=0$. Then there exists a pseudo-isomorphism

$$\dot{T}_{\Lambda}(\Selinf(E/K_{\infty})^*) \sim R_{p^{\infty}}(E/K_{\infty})^*$$
\end{theorem}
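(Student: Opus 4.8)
The plan is to deduce the theorem from a single fundamental four-term exact sequence of finitely generated $\Lambda$-modules obtained by comparing the two Selmer groups and then invoking Poitou--Tate duality. Write $\mathcal{J} = \bigoplus_{v \in S_{K_\infty}} E(K_{\infty,v}) \otimes \Qp/\Zp$ for the direct sum of the local Kummer images, viewed as a submodule of $\bigoplus_v H^1(K_{\infty,v}, E[p^\infty])$. Since $R_{p^\infty}(E/K_\infty)$ and $\Selinf(E/K_\infty)$ are cut out inside $H^1(G_S(K_\infty), E[p^\infty])$ by the vanishing of localizations in $\bigoplus_v H^1(K_{\infty,v}, E[p^\infty])$, respectively in $\bigoplus_v H^1(K_{\infty,v}, E)[p^\infty]$, applying the snake lemma to $0 \to \mathcal{J} \to \bigoplus_v H^1(K_{\infty,v}, E[p^\infty]) \to \bigoplus_v H^1(K_{\infty,v}, E)[p^\infty] \to 0$ together with the global-to-local maps yields
$$0 \to R_{p^\infty}(E/K_\infty) \to \Selinf(E/K_\infty) \to \mathcal{J} \to \mathcal{Q} \to 0,$$
where $\mathcal{Q} = \mathcal{J}/(\Selinf(E/K_\infty)/R_{p^\infty}(E/K_\infty))$ embeds into the cokernel of $H^1(G_S(K_\infty), E[p^\infty]) \to \bigoplus_v H^1(K_{\infty,v}, E[p^\infty])$. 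By the Poitou--Tate nine-term sequence for $E[p^\infty]$ over $G_S(K_\infty)$ and hypothesis (ii), this cokernel is $\dot{H}^1(G_S(K_\infty), T_pE)^{*}$, the inverse $\Gamma$-action being forced by the Galois-equivariance of the Poitou--Tate pairing $E[p^\infty] \times T_pE \to \Qp/\Zp$; tracing through the Cassels--Poitou--Tate sequence one gets more precisely $\mathcal{Q}^{*} \cong \dot{\big(H^1(G_S(K_\infty), T_pE)/\mathcal{S}\big)}$, where $\mathcal{S} = \varprojlim_n \Selm_{p^n}(E/K_n)$ is the compact ($T_pE$-coefficient) Selmer group. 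Hypothesis (i) enters so that $T_pE^{G_{K_\infty}} = 0$ and the auxiliary $H^0$-terms are finite.

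Dualizing this fundamental sequence gives an exact sequence of finitely generated $\Lambda$-modules
$$0 \to \mathcal{Q}^{*} \to \mathcal{J}^{*} \to \Selinf(E/K_\infty)^{*} \to R_{p^\infty}(E/K_\infty)^{*} \to 0,$$
with $\mathcal{J}^{*} = \bigoplus_{v \in S_{K_\infty}} \varprojlim_n E(K_{n,v})^{\wedge}$ (the $p$-adic completions, with norm transition maps). Thus $(\Selinf(E/K_\infty)/R_{p^\infty}(E/K_\infty))^{*} \cong \mathcal{J}^{*}/\mathcal{Q}^{*}$ sits inside $\Selinf(E/K_\infty)^{*}$ as the submodule with quotient $R_{p^\infty}(E/K_\infty)^{*}$. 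Since hypothesis (ii) is a form of the weak Leopoldt conjecture, $R_{p^\infty}(E/K_\infty)^{*}$ is $\Lambda$-torsion, so it suffices to show that, up to pseudo-isomorphism, this torsion quotient of $\Selinf(E/K_\infty)^{*}$ coincides with its torsion submodule $\dot{T}_\Lambda(\Selinf(E/K_\infty)^{*})$, i.e. that $(\Selinf(E/K_\infty)/R_{p^\infty}(E/K_\infty))^{*}$ is torsion-free up to a pseudo-null module and that it is the maximal torsion-free quotient.

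The heart of the argument is the local structure of $\mathcal{J}^{*}$. For $v \nmid p$ the group $E(K_{\infty,v}) \otimes \Qp/\Zp$ is finite, so that summand of $\mathcal{J}^{*}$ is pseudo-null and may be discarded. For $v \mid p$ the reduction is supersingular and, by hypothesis, $K_{\infty,v}/K_v$ is (eventually totally) ramified, hence deeply ramified in the sense of Coates--Greenberg; using this together with the height-two local Iwasawa theory of the formal group---Coates--Greenberg for $p$-divisibility and the Perrin-Riou/Kobayashi-type analysis of the module of norm-compatible formal points---one shows $\varprojlim_n E(K_{n,v})^{\wedge}$ is pseudo-isomorphic to a free $\Lambda$-module of rank $[K_v:\Qp]$. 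Summing over $v \mid p$, $\mathcal{J}^{*}$ is pseudo-isomorphic to $\Lambda^{[K:\Q]}$; in particular it is $\Lambda$-torsion-free with no nonzero pseudo-null submodule. Then, since $\mathcal{Q}^{*} \hookrightarrow \mathcal{J}^{*}$ it is torsion-free, and since $\mathcal{Q}^{*}$ is a quotient of $\dot{H}^1(G_S(K_\infty), T_pE)$---which has no nonzero pseudo-null $\Lambda$-submodule by a Jannsen/Nekov\'a\v{r}-type result---one concludes $\mathcal{Q}^{*}$ is saturated in $\mathcal{J}^{*}$, so $\mathcal{J}^{*}/\mathcal{Q}^{*}$ is torsion-free modulo a pseudo-null module. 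Feeding this back into the dualized sequence identifies $(\Selinf(E/K_\infty)/R_{p^\infty}(E/K_\infty))^{*}$, up to pseudo-isomorphism, with the maximal torsion-free quotient of $\Selinf(E/K_\infty)^{*}$, whence $\dot{T}_\Lambda(\Selinf(E/K_\infty)^{*}) \sim R_{p^\infty}(E/K_\infty)^{*}$; the $\dot{}$ on the left is precisely the inverse Galois action produced by Poitou--Tate duality in the first step.

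The main obstacle is the local computation at the supersingular primes: establishing that the modules $\varprojlim_n E(K_{n,v})^{\wedge}$ are pseudo-isomorphic to free $\Lambda$-modules of the correct rank. This is genuinely harder than the ordinary case, where deep ramification collapses the local term, and it is the place where "supersingular" is really used. A secondary difficulty is the module-theoretic bookkeeping in the final step---verifying that $\mathcal{Q}^{*}$ is saturated in $\mathcal{J}^{*}$, that the extension $0 \to \mathcal{J}^{*}/\mathcal{Q}^{*} \to \Selinf(E/K_\infty)^{*} \to R_{p^\infty}(E/K_\infty)^{*} \to 0$ introduces no spurious pseudo-null discrepancy, and that the inverse Galois actions are matched so that the dot falls on the correct side.
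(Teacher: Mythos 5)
Your comparison sequence $0 \to R_{p^{\infty}}(E/K_{\infty}) \to \Selinf(E/K_{\infty}) \to \mathcal{J} \to \mathcal{Q} \to 0$ and its Pontryagin dual are correct, and the local input you invoke at the supersingular primes (Coates--Greenberg deep ramification, and Wingberg's theorem that $(E(K_{\infty,w})\otimes\Qp/\Zp)^*$ is pseudo-isomorphic to a finitely generated free module) is indeed what the paper uses. The fatal problem is the final deduction. You arrive at an exact sequence $0 \to N \to \Selinf(E/K_{\infty})^* \to R_{p^{\infty}}(E/K_{\infty})^* \to 0$ with $N$ torsion-free up to pseudo-null and the quotient $\Lambda$-torsion, and you conclude that $T_{\Lambda}$ of the middle term is pseudo-isomorphic to the quotient. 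That implication is false for general $\Lambda$-modules: take $M=\Lambda$ and $N=(f)$ for an irreducible distinguished polynomial $f$; then $N$ is torsion-free, $M/N=\Lambda/(f)$ is torsion and not pseudo-null, yet $T_{\Lambda}(M)=0$. Torsion-freeness of $N$ only gives an injection of $T_{\Lambda}(\Selinf(E/K_{\infty})^*)$ into $R_{p^{\infty}}(E/K_{\infty})^*$ up to pseudo-null; surjectivity up to pseudo-null --- equivalently, that the extension splits up to pseudo-isomorphism --- is exactly the content of the theorem and is nowhere established. The saturation claim for $\mathcal{Q}^*$ inside $\mathcal{J}^*$ has the same defect: having no nonzero pseudo-null submodule does not make a submodule saturated (consider $p\Lambda\subset\Lambda$, whose quotient is entirely torsion and not pseudo-null). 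A further symptom that something is off: your surjection $\Selinf(E/K_{\infty})^*\to R_{p^{\infty}}(E/K_{\infty})^*$ is the untwisted dual of an inclusion of discrete modules, so your argument, if it worked, would yield $T_{\Lambda}(\Selinf(E/K_{\infty})^*)\sim R_{p^{\infty}}(E/K_{\infty})^*$ without the involution, whereas the theorem asserts the pseudo-isomorphism for $\dot{T}_{\Lambda}$; the twist is not cosmetic and must come from somewhere.

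The paper avoids this trap by never trying to read off the torsion submodule from a single exact sequence over $K_{\infty}$. It computes $\dot{T}_{\mu}(\Selinf(E/K_{\infty})^*)$ and $\dot{T}_{\lambda}(\Selinf(E/K_{\infty})^*)$ separately, via Wingberg's lemma, as $\ilim_{n,m}(\Selinf(E/K_{\infty})/p^m)^{\Gamma_n}$ and $\ilim_{n,m}\Selinf(E/K_{\infty})[p^m]_{\Gamma_n}$ respectively; it then descends to the finite layers $K_n$, glues the two pieces with the Hochschild--Serre exact sequence $0\to H^1(G_S(K_{\infty}),E[p^m])_{\Gamma_n}\to H^2(G_S(K_n),E[p^m])\to H^2(G_S(K_{\infty}),E[p^m])^{\Gamma_n}\to 0$ and its local analogue, and finally identifies $\ilim_{n,m}\Sha^2(G_S(K_n),E[p^m])$ with $R_{p^{\infty}}(E/K_{\infty})^*$ by Poitou--Tate duality --- which is precisely where the inverse $\Gamma$-action enters. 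Some such descent through the finite levels (or an equivalent device detecting $T_{\mu}$ and $T_{\lambda}$ directly) is needed; the purely module-theoretic route you propose cannot see the torsion submodule.
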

Concerning the conditions in the theorem, condition (i) is a mild one (see proposition \ref{finite_points_prop} below) whereas condition (ii) implies that $R_{p^{\infty}}(E/K_{\infty})^*$ is $\Lambda$-torsion (see theorem \ref{fine_Selmer_theorem}).

The theorem shows a nice relationship between the structures of the Selmer and fine Selmer group that is not at all clear exists from the definitions of these groups. The Selmer group in the supersingular case is difficult to deal with mainly due to the lack of a control theorem. The above theorem, we hope, will help us understand the structure of the Selmer group by proving results about the fine Selmer group which is more approachable.

Let $p$ be a fixed odd prime. If $K$ is a number field, we let $K^{cyc}$ be the cyclotomic $\Zp$-extension of $K$ and if $K$ is an imaginary quadratic field, we let $K^{anti}$ be the anticyclotomic $\Zp$-extension of $K$. Coates and Sujatha (\cite{CS1} conjecture A) and the author (\cite{Matar} conjecture B) have conjectured when $K_{\infty}=K^{cyc}$ (respectively $K_{\infty}=K^{anti}$) that $R_{p^{\infty}}(E/K_{\infty})^*$ is a finitely generated $\Zp$-module. This is equivalent to $R_{p^{\infty}}(E/K_{\infty})^*$ being a $\Lambda$-torsion module with $\mu$-invariant zero. Taking into  account theorem \ref{fine_Selmer_theorem}, the above theorem then predicts that $\Selinf(E/K_{\infty})^*$ has $\mu$-invariant zero when $E$ has supersingular reduction at primes of $K$ above $p$.

Using the results of Wuthrich \cite{Wuthrich}, one can in some cases prove results on the structure of $R_{p^{\infty}}(E/K_{\infty})^*$ and hence by the above theorem (if it's conditions are met) give results on $T_{\Lambda}(\Selinf(E/K_{\infty})^*)$. To illustrate this, consider the curve $E=X_0(11): y^2+y=x^3-x^2-10x-20$. This curve has supersingular reduction at $p=29$. Let $\Q^{cyc}$ be the cyclotomic $\Z_{29}$-extension of $\Q$. Wuthrich (\cite{Wuthrich} prop. 9.3) has shown that $R_{p^{\infty}}(E/\Q^{cyc})$ is finite. This together with theorem \ref{fine_Selmer_theorem} guarantees that condition (ii) of theorem \ref{main_theorem} is satisfied. Also by proposition \ref{finite_points_prop} condition (i) is satisfied since for example $E$ does not have CM. It follows from theorem \ref{main_theorem} that $T_{\Lambda}(\Selinf(E/\Q^{cyc})^*)$ is finite.

We give one more example. Let $K=\Q(\sqrt{-7})$ and $K_{\infty}/K$ the anticyclotomic $\Z_{29}$-extension of $K$. By using Wuthrich's work, the author has shown (\cite{Matar} sec 4) that $R_{p^{\infty}}(E/K_{\infty})^*$ is $\Lambda$-torsion with $\mu=0$. Hence by theorem \ref{main_theorem} $\Selinf(E/K_{\infty})^*$ has $\mu=0$.

Wingberg (\cite{Wingberg1} corollary 2.5) has proven a similar result to theorem \ref{main_theorem} stated in terms of flat cohomology rather than Galois cohomology. Although it may appear that the above theorem follows from Wingberg's result, the author has found difficulties in attempting such a deduction in the case when a prime $v$ of $K$ where $E$ has bad reduction splits completely in $K_{\infty}/K$. The following argument illustrates the potential obstacles. Let $E$ and $K$ be as in the theorem and let $\cE$ be the N\'{e}ron model of $E$ over $\cO_K$.

To attempt to deduce the above theorem from Wingberg's result, one would hope to show that $\Selinf(E/K_{\infty})^*$ and $H^1(\cO_{\infty}, \cE[p^{\infty}])^*$ are pseudo-isomorphic (where $\cO_{\infty}$ is the ring of integers of $K_{\infty}$). In hope of showing the existence of such a pseudo-isomorphism, one may use the results of \v{C}esnavi\v{c}ius's paper \cite{Ces1} as they are relevant. Assuming that no prime $v$ of $K$ where $E$ has bad reduction splits completely in $K_{\infty}/K$, the proof of \cite{Ces1} prop. 5.4 together with \cite{Ces1} prop. 2.5 show that the difference between the groups $\Selinf(E/K_n)$ and $H^1(\cO_{K_n}, \cE[p^{\infty}])$ is finite and bounded with $n$. This proves that $\Selinf(E/K_{\infty})^*$ and $H^1(\cO_{\infty}, \cE[p^{\infty}])^*$ are pseudo-isomorphic in this case. However in the case when a prime $v$ of $K$ where $E$ has bad reduction splits completely in $K_{\infty}/K$, this argument can fail and hence it is unclear that a pseudo-isomorphism exists in this case.

Also in order to invoke Wingberg's theorem, one needs the $\Lambda$-module $H^2(\cO_{\infty}, \cE[p^{\infty}])^*$ to be torsion. If no prime $v$ of $K$ where $E$ has bad reduction splits completely in $K_{\infty}/K$, then assuming $H^2(G_S(K_{\infty}), E[p^{\infty}])=0$ one can deduce the fact that $H^2(\cO_{\infty}, \cE[p^{\infty}])^*$ is $\Lambda$-torsion from \cite{Gb_IPR} prop. 3, prop. \ref{cohomology_group_vanishing_prop} below, \cite{Schneider2} sec3 corollary 5 and \v{C}esnavi\v{c}ius's results referred to above.  However such a deduction can fail when a prime $v$ of $K$ where $E$ has bad reduction splits completely in $K_{\infty}/K$.

The above arguments illustrate the difficulties in attempting to deduce theorem \ref{main_theorem} from Wingberg's result. Everything is done in this paper with Galois cohomology. Our method of proof generally follows Wingberg's with major differences being that all exact sequences arising from the spectral sequences of Schneider \cite{Schneider} are replaced with sequences arising from the snake lemma together with the Kummer sequence. The other difference is that the Artin-Mazur duality of flat cohomology groups is replaced with the Poitou-Tate duality of Galois cohomology groups.

The following proposition shows that condition (i) in theorem \ref{main_theorem} is a mild one. As the proposition shows, all elliptic curves without complex multiplication satisfy condition (i) in the theorem. For elliptic curves with complex multiplication a slightly weaker version of theorem \ref{main_theorem} is given in \cite{Billot}.

\begin{proposition}\label{finite_points_prop}
With the setup and conditions in theorem \ref{main_theorem}, we have that $E(K_{\infty})[p^{\infty}]$ is finite in the following cases:
\begin{enumerate}
\item $E$ does not have complex multiplication
\item $K_{\infty}/K$ is the cyclotomic $\Zp$-extension of $K$
\item $p$ is odd and splits in $K/\Q$
\end{enumerate}
\end{proposition}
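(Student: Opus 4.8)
The plan is to deal with the three cases by localising, in (2) and (3), at a single prime above $p$, and by a direct Galois-module argument in (1). Fix a prime $v$ of $K$ above $p$, a prime $w$ of $K_\infty$ above $v$, and write $K_v$, $K_{\infty,w}$ for the completions. Since a $K_\infty$-rational torsion point is in particular $K_{\infty,w}$-rational, restriction gives an injection $E(K_\infty)[p^\infty]\hookrightarrow E(K_{\infty,w})[p^\infty]$, so in cases (2) and (3) it suffices to bound the right-hand side; here $K_{\infty,w}/K_v$ is a \emph{ramified} $\Zp$-extension (this uses the hypothesis that every prime of $K$ over $p$ ramifies) and $E/K_v$ has good supersingular reduction.

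First I would handle case (1). Let $\rho\colon \Gal(\overline{K}/K)\to\Aut(T_pE)\cong \mathrm{GL}_2(\Zp)$ be the Galois representation on the Tate module. Since $E$ has no complex multiplication, $\rho$ has open image by Serre's theorem, so $V:=T_pE\otimes_{\Zp}\Qp$ is an irreducible $\Qp[\Gal(\overline{K}/K)]$-module. As $K_\infty/K$ is abelian, $\Gal(\overline{K}/K_\infty)$ is normal in $\Gal(\overline{K}/K)$ with abelian quotient, so $V^{\Gal(\overline{K}/K_\infty)}$ is a $\Gal(\overline{K}/K)$-submodule of $V$; by irreducibility it is $0$ or $V$, and it cannot be $V$, for then $\Gal(\overline{K}/K)$ would act on $V$ through its abelian quotient $\Gal(K_\infty/K)\cong\Zp$, contradicting openness of the image. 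Hence $T_pE^{\Gal(\overline{K}/K_\infty)}=0$ and $E(K_\infty)[p^\infty]$ is finite. (Equivalently, this is a special case of Ribet's theorem on torsion over abelian extensions.)

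Next, case (3): since $p$ splits in $K/\Q$ we have $K_v=\Qp$, so $E_{/\Qp}$ has good supersingular reduction. By Serre's description of the inertia action --- which one also reads off from the Newton polygon of $[p](X)$ for the formal group $\widehat{E}$ of height $2$, giving $E[p]=\widehat{E}[p]$ and that $\Qp(E[p])/\Qp$ is totally and tamely ramified of degree divisible by $p^2-1$ --- the image $\overline{I}$ of the inertia subgroup $I_v\subseteq G_v:=\Gal(\overline{\Qp}/\Qp)$ under the mod-$p$ representation $\bar\rho\colon G_v\to\Aut(E[p])\cong\mathrm{GL}_2(\FF_p)$ has order prime to $p$ and acts on $E[p]$ without nonzero fixed vectors (it contains a non-split Cartan subgroup, which acts simply transitively on $E[p]\setminus\{0\}$). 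Put $N=\Gal(\overline{\Qp}/K_{\infty,w})\triangleleft G_v$, so $G_v/N\cong\Zp$; since $K_{\infty,w}/\Qp$ is ramified, the image of $I_v$ in $G_v/N$ is nontrivial, so $I_v/(I_v\cap N)$ is a nontrivial pro-$p$ group. Now $\overline{I}/\bar\rho(I_v\cap N)$ is a quotient of $\overline{I}$ (order prime to $p$) and also a quotient of the pro-$p$ group $I_v/(I_v\cap N)$, hence trivial: $\bar\rho(I_v\cap N)=\overline{I}$. Therefore $E[p]^N\subseteq E[p]^{\bar\rho(I_v\cap N)}=E[p]^{\overline{I}}=0$, so $E(K_{\infty,w})[p]=0$, whence $E(K_{\infty,w})[p^\infty]=0$ and $E(K_\infty)[p^\infty]=0$. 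The same argument settles case (2) whenever $v$ is unramified over $p$.

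The hard part is the remaining subcase of (2): when $K_v/\Qp$ is ramified, the multiplication-by-$p$ polynomial of $\widehat{E}$ can develop wild ramification, so the inertia image on $E[p]$ need not have order prime to $p$ and the argument above breaks. To finish I would enlarge $K_{\infty,w}=K_v^{cyc}$ to the field $K_v(\mu_{p^\infty})$ containing it, and invoke Imai's theorem: since $E/K_v$ has good reduction, $E(K_v(\mu_{p^\infty}))_{\mathrm{tor}}$ is finite, hence so is $E(K_\infty)[p^\infty]\hookrightarrow E(K_v^{cyc})[p^\infty]$. In summary, granting the three structural inputs --- Serre's open image theorem, Serre's description of inertia in the supersingular case, and Imai's finiteness of torsion in $K_v(\mu_{p^\infty})$ for good reduction --- cases (1), (3), and the unramified part of (2) are essentially formal, and the only genuinely delicate point is the ramified subcase of (2), which is why one passes through $K_v(\mu_{p^\infty})$.
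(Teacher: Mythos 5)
Your argument is correct, but in cases (2) and (3) it follows a genuinely different route from the paper's. Case (1) is essentially the paper's proof: both rest on Serre's open image theorem, you via irreducibility of $V_p(E)$ plus normality of $\Gal(\overline{K}/K_{\infty})$, the paper via a two-case analysis (corank one versus all of $E[p^{\infty}]$ rational over $K_{\infty}$); these are the same argument in different clothing. For case (3) the paper is much more economical: supersingularity gives $E(K_v)[p^{\infty}]=\hat{E}(p\Zp)[p^{\infty}]$, which vanishes for $p$ odd since the formal group is torsion-free, so $E(K_{\infty})[p^{\infty}]^{\Gamma}=E(K)[p^{\infty}]=0$ and a discrete $p$-primary module over the pro-$p$ group $\Gamma$ with trivial invariants is trivial. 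This avoids your tame-inertia/non-split Cartan analysis entirely and, notably, does not use the local ramification hypothesis; your argument is correct but buys nothing extra here beyond the local vanishing $E(K_{\infty,w})[p^{\infty}]=0$, which the paper's argument also yields indirectly. For case (2) the paper simply cites Ribet's theorem on torsion of abelian varieties in cyclotomic extensions, whereas you reduce to the completion and invoke Imai's theorem; since Imai's local finiteness result is precisely the engine behind Ribet's theorem, the two routes are morally equivalent, though yours has the small virtue of making the local statement explicit. One remark: your separate tame-inertia treatment of the unramified subcase of (2) is superfluous, since the appeal to Imai (which needs only good reduction) already covers all of case (2) uniformly; and if you do retain it, note that case (2) does not assume $p$ odd, so the Imai route is the safer one anyway.
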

\begin{proof}
(i) Suppose that $E$ does not have complex multiplication. By a theorem of Serre \cite{Serre} this implies that $\Gal(K(E[p^{\infty}])/K)$ is an open subgroup of $GL_2(\Zp)$. Suppose that $E(K_{\infty})[p^{\infty}]$ is infinite. Then either $E(K_{\infty})[p^{\infty}]$ has $\Zp$-corank one or $E[p^{\infty}]$ is rational over $K_{\infty}$. In the first case $V_p(E)=T_p(E) \otimes_{\Zp} \Qp$ has a one-dimensional $\Gal(\bar{K}/K)$-invariant subspace. This clearly contradicts Serre's theorem. In the second case $K(E[p^{\infty}])/K$ is a subextension of $K_{\infty}/K$ and hence must be an abelian extension. This also contradicts Serre's theorem.\\
(ii) Follows from Ribet's theorem \cite{Ribet}\\
(iii) Suppose thet $p$ is odd and splits in $K/\Q$. Choose a prime $v$ of $K$ above $p$. Since $E$ has supersingular reduction at $v$, we have $E(K_v)[p^{\infty}]=E(\Qp)[p^{\infty}]=\hat{E}(p\Zp)[p^{\infty}]$ where $\hat{E}$ is the formal group of $E/\Qp$. By \cite{Silverman} ch. 4 th. 6.1 $\hat{E}(p\Zp)$ has no $p$-torsion if $p$ is odd so $E(K_v)[p^{\infty}]=\{0\}$. Therefore $E(K_{\infty})[p^{\infty}]^{\Gamma}=E(K)[p^{\infty}]=\{0\}$ which implies that $E(K_{\infty})[p^{\infty}]=\{0\}$.
\end{proof}

\section{Proof of Theorem}
Theorem \ref{main_theorem} will be proven in this section. The proof will be broken up into a number of propositions. We keep all the definitions and notation from the introduction and furthermore denote $\Gamma^{p^n}$ by $\Gamma_n$.

Let $A$ be a finitely generated $\Lambda$-module. We let $T_{\Lambda}(A)$ and $T_{\mu}(A)$ be the $\Lambda$-torsion submodule and $\Zp$-torsion submodule of $A$ respectively. Then define $T_{\lambda}(A):=T_{\Lambda}(A)/T_{\mu}(A)$. As in the introduction, we use the notation $\dot{T}_-(A)=T_-(\dot{A})$. We have the following lemma of Wingberg (\cite{Wingberg1} lemma 1.1)

\begin{lemma}\label{inverse_limit_lemma}
Let $A$ be a finitely generated $\Lambda$-module. Then we have pseudo-isomorphisms
\begin{enumerate}[(i)]
\item $\ilim_{n,m}(A^*/p^m)^{\Gamma_n} \sim \dot{T}_{\mu}(A)$
\item $\ilim_{n,m}(A^*[p^m])_{\Gamma_n} \sim \dot{T}_{\lambda}(A)$
\item $\ilim_{n,m}(A^*/p^m)_{\Gamma_n} \sim 0$
\end{enumerate}
where the inverse limits are taken with respect to multiplication-by-$p$ resp. canonical surjection and the norm map resp. canonical surjection.
\end{lemma}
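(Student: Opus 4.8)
The plan is to pass to Pontryagin duals, turning the three projective limits of subquotients of $A^*$ into injective limits of subquotients of $A$, where everything is transparent. Since $A$ is finitely generated, hence compact, one has the natural identifications $A^*[p^m]=(A/p^mA)^*$, $A^*/p^mA^*=(A[p^m])^*$, $(X^{\Gamma_n})^*=(X^*)_{\Gamma_n}$, $(X_{\Gamma_n})^*=(X^*)^{\Gamma_n}$, and duality interchanges $\varprojlim$ with $\varinjlim$. Using these and carrying out the limit in $m$ first (where $\varinjlim_m A[p^m]=A[p^\infty]=T_\mu(A)$ along the inclusions and $\varinjlim_m A/p^mA=A\otimes_{\Zp}\Qp/\Zp$ along multiplication by $p$), the three limits in the lemma become the Pontryagin duals of
\begin{equation*}
\text{(i)}\ \varinjlim_{n}\big(T_\mu(A)\big)_{\Gamma_n},\qquad
\text{(ii)}\ \varinjlim_{n}\big(A\otimes_{\Zp}\Qp/\Zp\big)^{\Gamma_n},\qquad
\text{(iii)}\ \varinjlim_{n}\big(T_\mu(A)\big)^{\Gamma_n},
\end{equation*}
where the transition map in $n$ is, for (i), multiplication on coinvariants by the norm element $\nu_n=1+\gamma^{p^{n-1}}+\dots+\gamma^{(p-1)p^{n-1}}$ (dual to corestriction on invariants), and for (ii) and (iii) the canonical inclusion of invariants (dual to the projection of coinvariants); for (ii) one also uses that invariants commute with $\varinjlim_m A/p^mA$, which is clear when $A$ has no $\Zp$-torsion and in general reduces to that case. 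One must keep track of the $\Gamma$-action on duals throughout, and this bookkeeping is precisely what produces the involution $\dot{}$ in the answer.

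Statement (iii) is then immediate: $\varinjlim_n(T_\mu(A))^{\Gamma_n}=\bigcup_n(T_\mu(A))^{\Gamma_n}$ is the maximal discrete submodule of the compact, bounded-$p$-torsion module $T_\mu(A)$, hence finite, so (iii)$\,\sim\,0$. For (i) and (ii) I would reduce to $A$ elementary. Both sides of each asserted pseudo-isomorphism are additive and depend only on the pseudo-isomorphism class of $A$: for the right-hand sides by the structure theorem, and for the left-hand sides because, given $0\to C\to A\to B\to D\to 0$ with $C,D$ finite, the induced maps on the limits (i)--(iii) have kernel and cokernel assembled from the same functors applied to $C$, $D$ and to the finite $\Gamma_n$-(co)homology groups of these, all of uniformly bounded finite order, so a direct-limit argument bounds them; dualising gives the claim. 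Hence it suffices to treat $A=\Lambda$, $A=\Lambda/p^a\Lambda$, and $A=\Lambda/g^b\Lambda$ with $g$ an irreducible distinguished polynomial coprime to $p$.

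For $A=\Lambda$ or $A=\Lambda/g^b\Lambda$ we have $T_\mu(A)=0$, so (i) and (iii) vanish, matching $\dot T_\mu(A)=0$. For (ii) with $A=\Lambda$: $\gamma^{p^n}-1$ acts injectively on $\Lambda\otimes\Qp/\Zp$ because $\Lambda/(\gamma^{p^n}-1)\Lambda$ is $\Zp$-free, so $\mathrm{Tor}_1^{\Zp}\big(\Lambda/(\gamma^{p^n}-1)\Lambda,\Qp/\Zp\big)=0$; hence $(\Lambda\otimes\Qp/\Zp)^{\Gamma_n}=0$ and (ii)$\,=\,0=\dot T_\lambda(\Lambda)$. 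For $A=\Lambda/p^a\Lambda$: $A\otimes\Qp/\Zp=0$ gives (ii)$\,=\,0=\dot T_\lambda(A)$; and (i)$\,=\,\big(\varinjlim_n(\Lambda/p^a\Lambda)_{\Gamma_n}\big)^*$, where $\varinjlim_n(\Lambda/p^a\Lambda)_{\Gamma_n}$ along the $\nu_n$-maps is $\varinjlim_n(\Z/p^a\Z)[\Gamma/\Gamma_n]$ along the trace maps, i.e. (with the appropriate $\Gamma$-action) the Pontryagin dual of $\Lambda/p^a\Lambda$, so that (i)$\,=\,\big((\Lambda/p^a\Lambda)^\vee\big)^*\cong\dot{(\Lambda/p^a\Lambda)}=\dot T_\mu(A)$.

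It remains to treat (ii) for $A=\Lambda/g^b\Lambda$, which I expect to be the main obstacle. Here $A$ is a free $\Zp$-module, so $A\otimes\Qp/\Zp$ is a \emph{discrete} $\Gamma$-module, whence $\varinjlim_n(A\otimes\Qp/\Zp)^{\Gamma_n}$ is all of $A\otimes\Qp/\Zp$ and (ii)$\,=\,(A\otimes_{\Zp}\Qp/\Zp)^*\cong\Hom_{\Zp}(A,\Zp)$ with the contragredient $\Gamma$-action. One must then show $\Hom_{\Zp}(\Lambda/g^b\Lambda,\Zp)\sim\dot{(\Lambda/g^b\Lambda)}=\dot T_\lambda(\Lambda/g^b\Lambda)$. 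The two sides have the same $\Zp$-rank and the same characteristic ideal $(g^b)$, but that alone does not produce a pseudo-isomorphism, so one genuinely has to exhibit one: for $b=1$ this is classical via the trace form, since $\Hom_{\Zp}(\Zp[T]/g,\Zp)$ is the inverse different of the order $\Zp[T]/g$, which contains $\Zp[T]/g$ with finite index, and the substitution $\gamma\mapsto\gamma^{-1}$ accounts for the $\dot{}$; for general $b$ one argues similarly, or invokes the standard fact that on a finitely generated torsion $\Lambda$-module without nonzero finite submodules Pontryagin duality agrees, up to the involution, with the Iwasawa adjoint $\mathrm{Ext}^1_\Lambda(-,\Lambda)$, together with $\mathrm{Ext}^1_\Lambda(\Lambda/g^b\Lambda,\Lambda)\cong\Lambda/g^b\Lambda$. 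All the remaining verifications — the dualisation identities, the transition-map computations, and the reduction step — are routine.
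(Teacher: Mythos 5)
The paper does not actually prove this lemma: it is imported verbatim as Lemma~1.1 of Wingberg \cite{Wingberg1}, so there is no internal proof to compare yours against. Judged on its own, your argument is correct and is essentially the standard way such statements are established (dualise, reduce to elementary modules by the structure theorem, and identify the $\Zp$-dual of $\Lambda/g^b$ with its Iwasawa adjoint up to the involution), which is also the spirit of Wingberg's original proof. Two remarks. First, your reading of the transition maps is the right one, and it is worth noting that it is forced: if the $m$-transition on $A^*/p^m$ were multiplication by $p$ rather than the canonical surjection, the dual system would be $(A[p^m],\times p)$ whose colimit vanishes, contradicting (i); with your reading all the groups $(A^*/p^m)^{\Gamma_n}=((A[p^m])_{\Gamma_n})^*$ and $(A^*[p^m])_{\Gamma_n}=((A/p^m)^{\Gamma_n})^*$ are duals of finite groups (finitely generated modules over the finite rings $\Lambda/(p^m,\omega_n)$), so Pontryagin duality interchanges the double limits without any topological subtlety. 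Second, two of your steps are stated more loosely than they need to be but are easily repaired: in (iii), rather than appealing to a ``maximal discrete submodule'' of $T_\mu(A)$, just observe that the ascending chain $T_\mu(A)^{\Gamma_n}=T_\mu(A)[\omega_n]$ stabilises by Noetherianity and $T_\mu(A)[\omega_N]$ is finitely generated over the finite ring $\Lambda/(p^a,\omega_N)$, hence finite; and in the reduction step, the finiteness of the kernels and cokernels after passing to the limit rests on the fact that a filtered colimit of groups of uniformly bounded order again has bounded order, which you should state explicitly. With those points made precise, the proof is complete, and it has the merit of making the paper self-contained where it currently relies on an external reference.
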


Now let $F$ be a number field, $S$ a finite set of primes of $F$ and $B$ a finite $G_S$-module whose order is only divisible by rational primes lying below primes in $S$. Define $B':=\Hom(B, \mu)$ where $\mu$ is the group of all roots of unity in $\C$. We let $F_S$ be the maximal extension of $F$ unramified outside $S$. Suppose now that $L$ is a number field with $F \subseteq L \subseteq F_S$. We let $G_S(L)=\Gal(F_S/L)$ and $S_L$ be the set of primes of $L$ above those in $S$. Then we have the following perfect Poitou-Tate duality pairing (\cite{NSW} theorem 8.6.7)

\begin{equation}\label{pairing1}
\Sha^1(G_S(L), B') \times \Sha^2(G_S(L), B) \to \Q/\Z
\end{equation}\\
where $\Sha^i(G_S(L), M)$ ($M$ is any $G_S$-module) is defined to be the kernel of the restriction map $H^i(G_S(L), M) \to \prod_{v \in S_L} H^i(L_v, M)$. If $L_{\infty}/F$ is an infinite extension contained in $F_S$ we define $\Sha^i(G_S(L_{\infty}), M) = \dlim \Sha^i(G_S(L'), M)$ where the direct limit is taken over all intermediate finite extensions $L'/L$ contained in $L_{\infty}$ with respect to the restriction maps.

Now if $E$ is an elliptic curve defined over $F$, $p$ a rational prime and $S$ a finite set of primes of $F$ containing all primes dividing $p$, then for any $n \geq 0$ the Weil pairing together with the above pairing give a perfect pairing

\begin{equation}\label{pairing2}
\langle \;,\; \rangle: \Sha^1(G_S(L), E[p^n]) \times \Sha^2(G_S(L), E[p^n]) \to \Qp/\Zp
\end{equation}\\
Now let $L'$ be a finite extension of $L$ contained in $F_S$. The definition of this pairing (see \cite{NSW} theorem 8.6.7) shows that it is induced by the cup product. Therefore for $a \in \Sha^1(G_S(L'), E[p^n])$ and $b \in \Sha^2(G_S(L), E[p^n])$ we have $\langle \cor a, b \rangle = \langle a, \res b \rangle$ where $\cor: \Sha^1(G_S(L'), E[p^n]) \to \Sha^1(G_S(L), E[p^n])$ is the corestriction map and $\res: \Sha^2(G_S(L), E[p^n]) \to \Sha^2(G_S(L'), E[p^n])$ is the restriction map.

The following theorem is well-known (see for example \cite{PR} prop. 1.3.2). Using the above pairing and a control theorem, we will present another proof of this theorem

\begin{theorem}\label{fine_Selmer_theorem}
Let $K$ be a number field, $p$ a rational prime, $K_{\infty}/K$ a $\Zp$-extension and $E$ an elliptic curve defined over $K$. Let $S$ be a finite set of primes of $K$ containing all the primes dividing $p$, all the primes where $E$ has bad reduction and all the archimedean primes. Then $R_{p^{\infty}}(E/K_{\infty})^*$ is $\Lambda$-torsion if and only if $\Sha^2(G_S(K_{\infty}), E[p^{\infty}])^*$ is $\Lambda$-torsion. If $p$ is odd, this statement is equivalent to $H^2(G_S(K_{\infty}), E[p^{\infty}])=0$.
\end{theorem}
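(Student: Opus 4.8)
The plan is to reduce both equivalences to a comparison of $\Zp$-coranks at finite layers of the tower, where the Poitou--Tate pairing \eqref{pairing2} is available, and then to propagate the comparison up to $K_\infty$ by control theorems.

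\emph{Step 1 (a finite-layer identity).} From the definitions, $R_{p^{\infty}}(E/L)=\Sha^1(G_S(L),E[p^{\infty}])$ for every $L$ with $K\subseteq L\subseteq K_S$. Since the pairing \eqref{pairing2} is perfect, $|\Sha^1(G_S(K_n),E[p^m])|=|\Sha^2(G_S(K_n),E[p^m])|$ for all $n,m$; comparing $p^m$-orders as $m\to\infty$ (the natural maps $\Sha^i(G_S(K_n),E[p^m])\to\Sha^i(G_S(K_n),E[p^{\infty}])[p^m]$ have kernel and cokernel bounded in $m$, being governed by $H^0$'s of $E[p^m]$) gives $\corank_{\Zp}R_{p^{\infty}}(E/K_n)=\corank_{\Zp}\Sha^2(G_S(K_n),E[p^{\infty}])$ for every $n$. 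Moreover, for each finite place $v$ one has $H^2(K_{n,v},E[p^{\infty}])=0$ by local Tate duality together with the finiteness of $E(K_{n,v})[p^{\infty}]$, so $\Sha^2(G_S(K_n),E[p^{\infty}])=H^2(G_S(K_n),E[p^{\infty}])$ up to a contribution of the archimedean places that is finite and bounded in $n$, and is trivial when $p$ is odd; likewise over $K_{\infty}$.

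\emph{Step 2 (passing to $K_\infty$).} For a cofinitely generated $\Lambda$-module $A$, $A^*$ is $\Lambda$-torsion if and only if $\corank_{\Zp}A^{\Gamma_n}$ is bounded in $n$ (this is of the same nature as Lemma \ref{inverse_limit_lemma}). I would then use two control theorems. For the fine Selmer group the control theorem is clean: since the local conditions are trivial, the kernel and cokernel of $R_{p^{\infty}}(E/K_n)\to R_{p^{\infty}}(E/K_{\infty})^{\Gamma_n}$ are controlled only by $H^1(\Gamma_n,E(K_{\infty})[p^{\infty}])$ and by the $H^1(\Gal(K_{\infty,v}/K_{n,v}),E(K_{\infty,v})[p^{\infty}])$, all of $\Zp$-corank bounded in $n$, so $\corank_{\Zp}R_{p^{\infty}}(E/K_n)=\corank_{\Zp}R_{p^{\infty}}(E/K_{\infty})^{\Gamma_n}+O(1)$. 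For $H^2$, the Hochschild--Serre sequence for $G_S(K_{\infty})\triangleleft G_S(K_n)$ (which degenerates because $\Gamma_n$ has cohomological dimension $\le 1$) gives $H^2(G_S(K_n),E[p^{\infty}])\twoheadrightarrow H^2(G_S(K_{\infty}),E[p^{\infty}])^{\Gamma_n}$ with kernel $H^1(G_S(K_{\infty}),E[p^{\infty}])_{\Gamma_n}$; although $H^1(G_S(K_{\infty}),E[p^{\infty}])$ has large $\Lambda$-corank, the $\Gamma_n$-invariants of its Pontryagin dual have $\Zp$-rank bounded in $n$, so this kernel has bounded $\Zp$-corank and $\corank_{\Zp}H^2(G_S(K_n),E[p^{\infty}])=\corank_{\Zp}H^2(G_S(K_{\infty}),E[p^{\infty}])^{\Gamma_n}+O(1)$. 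Combining with Step 1: $R_{p^{\infty}}(E/K_{\infty})^*$ is $\Lambda$-torsion $\iff$ $\corank_{\Zp}R_{p^{\infty}}(E/K_n)$ is bounded $\iff$ $\corank_{\Zp}H^2(G_S(K_n),E[p^{\infty}])$ is bounded $\iff$ $H^2(G_S(K_{\infty}),E[p^{\infty}])^*$, equivalently $\Sha^2(G_S(K_{\infty}),E[p^{\infty}])^*$, is $\Lambda$-torsion. This is the first equivalence.

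\emph{Step 3 ($p$ odd).} Now $\Sha^2(G_S(K_{\infty}),E[p^{\infty}])=H^2(G_S(K_{\infty}),E[p^{\infty}])$, so only ``$\Lambda$-torsion $\Rightarrow$ zero'' remains. Since $p$ is odd, $G_S(K_{\infty})$ has $p$-cohomological dimension $\le 2$, so multiplication by $p$ is surjective on $H^2(G_S(K_{\infty}),E[p^{\infty}])$; this group is therefore divisible, its Pontryagin dual is $\Zp$-torsion-free, and in fact --- by the classical argument (see \cite{PR} prop. 1.3.2), equivalently because the compact fine Selmer group $\Sha^1(G_S(K_{\infty}),T_p(E))$ carries no nonzero $\Lambda$-torsion submodule --- this dual is a free $\Lambda$-module. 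A free $\Lambda$-module that is $\Lambda$-torsion is zero, so $H^2(G_S(K_{\infty}),E[p^{\infty}])=0$.

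The hard part is the vanishing statement in Step 3: deducing $H^2(G_S(K_{\infty}),E[p^{\infty}])=0$ from $\Lambda$-cotorsionness requires that its dual have no $\Lambda$-torsion, and this is precisely where oddness of $p$ is essential (for $p=2$ the $2$-cohomological dimension of $G_S(K_{\infty})$ need not be $\le 2$, $H^2$ need not be divisible, and indeed the equivalence with $H^2=0$ fails). The book-keeping of the error terms in the $H^2$-control theorem in Step 2 --- especially at places that split completely in $K_{\infty}/K$ --- also needs care, although it does not affect whether the relevant $\Zp$-coranks stay bounded, and hence does not affect the first equivalence.
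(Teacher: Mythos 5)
Your argument is correct in outline, and it reaches the first equivalence by a route that is organized differently from the paper's, although both rest on the same two pillars: Poitou--Tate duality and a control theorem for the fine Selmer group. The paper never compares $\Zp$-coranks layer by layer. Instead it uses the cor/res-adjointness of the pairing (\ref{pairing2}) to identify $\Sha^2(G_S(K_{\infty}),E[p^{\infty}])^*$ outright with the compact module $\ilim_{n,m}\Sha^1(G_S(K_n),E[p^m])$, and then proves two $\Lambda$-rank equalities --- a control theorem comparing this with $\ilim_{n,m}\Sha^1(G_S(K_{\infty}),E[p^m])^{\Gamma_n}$, and a comparison of $E[p^m]$- with $E[p^{\infty}]$-coefficients --- ending with the fact (\cite{NSW} prop.\ 5.5.10) that $\ilim_{n,m}R_{p^{\infty}}(E/K_{\infty})[p^m]^{\Gamma_n}$ is $\Lambda$-free of rank equal to $\corank_{\Lambda}R_{p^{\infty}}(E/K_{\infty})$. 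In particular the paper needs no Hochschild--Serre control for $H^2$ in this proof (your Step 2), because $\Sha^2$ over $K_{\infty}$ is dualized directly down to the finite layers; conversely, your boundedness-of-coranks criterion replaces the freeness statement from NSW. Your version is more elementary and purely numerical; the paper's version produces the compact modules that are reused later (in Lemmas \ref{mu_lemma} and \ref{lambda_lemma} and at the end of the proof of the main theorem), which is presumably why it is set up that way. Two points to repair in a careful write-up: in Step 1 the kernel of $\Sha^2(G_S(K_n),E[p^m])\to\Sha^2(G_S(K_n),E[p^{\infty}])[p^m]$ is governed by $H^1(G_S(K_n),E[p^{\infty}])/p^m$, not by an $H^0$ (it is still bounded in $m$ because the divisible part of $H^1$ dies in that quotient, but the stated reason is wrong); and your criterion in Step 2 presupposes that $H^2(G_S(K_{\infty}),E[p^{\infty}])^*$ is finitely generated over $\Lambda$, which should be noted (it follows from the finiteness of $H^2(G_S(K),E[p])$ via the same degenerate Hochschild--Serre sequence). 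For the $p$ odd statement you and the paper use the identical input, namely the $\Lambda$-cofreeness of $H^2(G_S(K_{\infty}),E[p^{\infty}])$ from \cite{Gb_IPR} prop.\ 4; divisibility alone gives only $\Zp$-torsion-freeness of the dual, so that citation is doing the real work in your Step 3.
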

\begin{proof}
Suppose that $p$ is odd. According to \cite{Gb_IPR} prop. 4, $H^2(G_S(K_{\infty}), E[p^{\infty}])$ is a cofree $\Lambda$-module. We will now show that $\Sha^2(G_S(K_{\infty}), E[p^{\infty}])=H^2(G_S(K_{\infty}), E[p^{\infty}])$. This together with the result just mentioned will show that the second statement of the theorem will follow from the first. Let $w$ be a prime of $K_{\infty}$ above a prime $v$ is $S$. We will show that $H^2(K_{\infty,w}, E[p^{\infty}])=0$. This is true for archimedean primes $w$ since $p$ is odd.

Now assume that $w$ is nonarchimedean. We consider two cases. First we consider the case where $v$ splits completely in $K_{\infty}/K$. In this case we have $H^2(K_{\infty,w}, E[p^{\infty}])=H^2(K_v, E[p^{\infty}])$. By Tate local duality this group is dual to $T_p(E(K_v)[p^{\infty}])$ (the $p$-adic Tate module of $E(K_v)[p^{\infty}]$) and hence $H^2(K_{\infty,w}, E[p^{\infty}])=0$ since $E(K_v)[p^{\infty}]$ is finite. Now consider the case where $v$ does not split completely in $K_{\infty}/K$. In this case, the extension $K_{\infty,w}/K_v$ is an infinite pro-$p$ extension. Hence by \cite{NSW} theorem 7.1.8(i) $cd_p(K_{\infty, w}) \le 1$. So $H^2(K_{\infty,w}, E[p^{\infty}])=0$ in this case also. This proves that $\Sha^2(G_s(K_{\infty}), E[p^{\infty}])=H^2(G_S(K_{\infty}), E[p^{\infty}])$ as desired.

Now we prove the first statement. By the restriction-corestriction property of the pairing (\ref{pairing2}), the Pontryagin dual of $\Sha^2(G_S(K_{\infty}), E[p^{\infty}])$ can be identified with $\ilim_{n,m} \Sha^1(G_S(K_n), E[p^m])$ where $K_n$ is the fixed field of $\Gamma_n$ and the inverse limit is taken over $m$ with regards to multiplication-by-$p$ and over $n$ with regards to corestriction. Therefore we see that to prove the first statement, we must show that $\rank_{\Lambda}(\ilim_{n,m} \Sha^1(G_S(K_n), E[p^m]))=\rank_{\Lambda}(R_{p^{\infty}}(E/K_{\infty})^*)$.

Consider the group $\ilim_{n,m} \Sha^1(G_S(K_{\infty}), E[p^{\infty}])[p^m]^{\Gamma_n}=\ilim_{n,m} R_{p^{\infty}}(E/K_{\infty})[p^m]^{\Gamma_n}$ where the inverse limit is taken over $m$ with regards to multiplication-by-$p$ and over $n$ with regards to the norm map. According to \cite{NSW} prop. 5.5.10(i) this group is a free $\Lambda$-module with rank equal to the the $\Lambda$-corank of $R_{p^{\infty}}(E/K_{\infty})$. Therefore it will suffice to show that
\begin{align}
\rank_{\Lambda}(\ilim_{n,m} \Sha^1(G_S(K_n), E[p^m]))&=\rank_{\Lambda}(\ilim_{n,m} \Sha^1(G_S(K_{\infty}), E[p^m])^{\Gamma_n}) \label{firstrankequality} \\ &=\rank_{\Lambda}(\ilim_{n,m} \Sha^1(G_S(K_{\infty}), E[p^{\infty}])[p^m]^{\Gamma_n}) \label{secondrankequality}
\end{align}

We first show the equality (\ref{secondrankequality}). For any $m \geq 0$ the snake lemma gives a long exact sequence which we split into two short exact sequences below
\begin{equation}\label{FSG_sequence1}
0 \to \mathcal{M}_{p^m}(E/K_{\infty}) \to \Sha^1(G_S(K_{\infty}), E[p^m]) \to \mathcal{D}_{p^m}(E/K_{\infty}) \to 0
\end{equation}

\begin{equation}\label{FSG_sequence2}
0 \to \mathcal{D}_{p^m}(E/K_{\infty}) \to \Sha^1(G_S(K_{\infty}), E[p^{\infty}])[p^m] \to \mathcal{C}_{p^m}(E/K_{\infty})
\end{equation}\\
where $\mathcal{M}_{p^m}(E/K_{\infty})=E(K_{\infty})[p^{\infty}]/p^m \cap \Sha^1(G_S(K_{\infty}), E[p^m]), \mathcal{D}_{p^m}(E/K_{\infty})=\img(\Sha^1(G_S(K_{\infty}), E[p^m]) \to \Sha^1(G_S(K_{\infty}), E[p^{\infty}])[p^m])$ and $\mathcal{C}_{p^m}(E/K_{\infty})=\coker(E(K_{\infty})[p^{\infty}]/p^m \to \underset{w \in S_{\infty}}{\bigoplus} E(K_{\infty,w})[p^{\infty}]/p^m)$. $S_{\infty}$ being the primes of $K_{\infty}$ above those in $S$.

For any $n \ge 0$, the sequence (\ref{FSG_sequence1}) induces another sequence
$$0 \to \mathcal{M}_{p^m}(E/K_{\infty})^{\Gamma_n} \to \Sha^1(G_S(K_{\infty}), E[p^m])^{\Gamma_n} \to \mathcal{D}_{p^m}(E/K_{\infty})^{\Gamma_n} \to \mathcal{M}_{p^m}(E/K_{\infty})_{\Gamma_n}$$

We claim that all the groups in this exact sequence are finite. Since $E(K_{\infty})[p^{\infty}]/p^m$ is finite, therefore the first and last terms of the sequence are finite. So we only have to show that the third term is finite. This will follow if we show that $\Sha^1(G_S(K_{\infty}), E[p^{\infty}])[p^m]^{\Gamma_n}$ is finite. The finiteness of this group is easily seen by taking Pontryagin duals and noting that $\Sha^1(G_S(K_{\infty}), E[p^{\infty}])^*$ is a finitely generated $\Lambda$-module ($\Sha^1(G_S(K_{\infty}), E[p^{\infty}]) \subseteq \Selinf(E/K_{\infty})$ and $\Selinf(E/K_{\infty})^*$ is a finitely generated $\Lambda$-module by \cite{Manin} theorem 4.5). Therefore we have seen that all the groups in the above exact sequence are finite and so by taking inverse limits the sequence remains exact
\begin{align*}
0 &\to \ilim_{n,m}\mathcal{M}_{p^m}(E/K_{\infty})^{\Gamma_n} \to \ilim_{n,m}\Sha^1(G_S(K_{\infty}, E[p^m])^{\Gamma_n}\\
&\to \ilim_{n,m} \mathcal{D}_{p^m}(E/K_{\infty})^{\Gamma_n} \to \ilim_{n,m}\mathcal{M}_{p^m}(E/K_{\infty})_{\Gamma_n}
\end{align*}

The groups $E(K_{\infty})[p^{\infty}]/p^m$ are finite of bounded order as $m$ varies whence the groups $\mathcal{M}_{p^m}(E/K_{\infty})^{\Gamma_n}$ and $\mathcal{M}_{p^m}(E/K_{\infty})_{\Gamma_n}$ are finite of bounded order as $n$ and $m$ vary. It follows that the first and last inverse limits in the above sequence are finite.

Therefore the map $$\ilim_{n,m} \Sha^1(G_S(K_{\infty}), E[p^m])^{\Gamma_n} \to \ilim_{n,m} \mathcal{D}_{p^m}(E/K_{\infty})^{\Gamma_n}$$ has finite kernel and cokernel which shows that
$$\rank_{\Lambda}(\ilim_{n,m} \Sha^1(G_S(K_{\infty}), E[p^m])^{\Gamma_n})=\rank_{\Lambda}(\ilim_{n,m} \mathcal{D}_{p^m}(E/K_{\infty})^{\Gamma_n})$$
From this and the sequence (\ref{FSG_sequence2}), we see that in order to show the equality (\ref{secondrankequality}) we only have to show that $\ilim_{n,m} \mathcal{C}_{p^m}(E/K_{\infty})^{\Gamma_n}$ is $\Lambda$-torsion. That the group $\ilim_{n,m} \mathcal{C}_{p^m}(E/K_{\infty})^{\Gamma_n}$ is $\Lambda$-torsion follows from the facts that $\ilim_{n,m}(\underset{w \in S_{\infty}}{\bigoplus} E(K_{\infty,w})[p^{\infty}]/p^m)^{\Gamma_n}$ and $\ilim_{n,m}(E(K_{\infty})[p^{\infty}]/p^m)_{\Gamma_n}$ are $\Lambda$-torsion from lemma \ref{inverse_limit_lemma}. Therefore we have established the equality (\ref{secondrankequality}).

We now prove the equality (\ref{firstrankequality}) by means of a control theorem. We denote $\ilim_{n,m} \Sha^1(G_S(K_n), E[p^m])$ by $X_{p^{\infty}}(E/K_{\infty})$ and $\ilim_{n,m} \Sha^1(G_S(K_{\infty}), E[p^m])^{\Gamma_n}$ by $Y_{p^{\infty}}(E/K_{\infty})$. In order to prove the equality (\ref{firstrankequality}), it suffices to show that the map $\Xi: X_{p^{\infty}}(E/K_{\infty}) \to Y_{p^{\infty}}(E/K_{\infty})$ induced by restriction has $\Lambda$-torsion kernel and cokernel. We do this by means of a control theorem. Consider the following commutative diagram

\begin{equation}
\xymatrix{
0 \ar[r] & \Sha^1(G_S(K_{\infty}), E[p^m])^{\Gamma_n} \ar[r] & H^1(G_S(K_{\infty}), E[p^m])^{\Gamma_n} \ar[r]^-{\psi_{{\infty},m}}
& \underset{w \in S_{\infty}}{\bigoplus} H^1(K_{\infty,w}, E[p^m])^{\Gamma_n} \\
0 \ar[r] & \Sha^1(G_S(K_n), E[p^m]) \ar[u]_{s_{n,m}} \ar[r] & H^1(G_S(K_n), E[p^m]) \ar[u]_{h_{n,m}} \ar[r]^-{\psi_{n,m}}
& \underset{v \in S_n} {\bigoplus} H^1(K_{n,v}, E[p^m]) \ar[u]_{g_{n,m}}
}
\end{equation}
In the commutative diagram above the sets $S_n$ and $S_{\infty}$ are the sets of primes above $S$ in $K_n$ and $K_{\infty}$ respectively and the vertical maps are restriction. Taking inverse limits over $n$ and $m$ in the above, we get another commutative diagram

\begin{equation}
\xymatrix{
0 \ar[r] & Y_{p^{\infty}}(E/K_{\infty}) \ar[r] & \ilim_{n,m} H^1(G_S(K_{\infty}), E[p^m])^{\Gamma_n} \ar[r]^-{\upphi}
& \ilim_{n,m} \underset{w \in S_{\infty}} \bigoplus H^1(K_{\infty,w}, E[p^m])^{\Gamma_n} \\
0 \ar[r] & X_{p^{\infty}}(E/K_{\infty}) \ar[u]_{\Xi} \ar[r] & \ilim_{n,m} H^1(G_S(K_n), E[p^m]) \ar[u]_{\Xi'} \ar[r]^-{\uppsi}
& \ilim_{n,m} \underset{v \in S_n} \bigoplus H^1(K_{n,v}, E[p^m]) \ar[u]_{\Xi''}
}
\end{equation}
From the snake lemma, we see that in order to show that $\coker \Xi$ is $\Lambda$-torsion, we only have to show that both $\ker \Xi''$ and $\coker \Xi'$ are $\Lambda$-torsion. Since $cd_p(\Gamma)=1$, therefore it follows that $\coker \Xi'=0$. Now we deal with $\ker \Xi''$. Primes in $S$ that split completely in $K_{\infty}/K$ do not contribute anything to $\ker \Xi''$ so we may assume that $S$ has no such primes.

Now choose an $M$ such that $\#S_M=\#S_{\infty}$ and let $m=\#S_M$. For every $n \geq M$ we label the primes in $S_n$ as $v_1, v_2, ..., v_m$ and the primes of $S_{\infty}$ as $w_1, w_2, ..., w_m$. We choose a labelling such that if $k \geq j \geq M$ then $w_i \in S_{\infty}$ lies above $v_i \in S_k$ lies above $v_i \in S_j$. With this labelling we have
$$\ker \Xi''= \bigoplus_{i=1}^m \ilim_m \ilim_{n \geq M} H^1(\Gal(K_{\infty, w_i}/K_{n, v_i}), E(K_{\infty, w_i})[p^m])$$
where the inverse limit is taken over $n$ with respect to the corestriction maps and over $m$ with respect to multiplication-by-$p$.

For any $n \geq M$ and any $i$ we have $\Gal(K_{\infty, w_i}/K_{n, v_i})=\Gamma_n$, therefore if $g$ is a topological generator of $\Gamma$ we have $H^1(\Gal(K_{\infty, w_i}/K_{n, v_i}), E(K_{\infty, w_i})[p^m])=E(K_{\infty, w_i})[p^m]/(g^{p^n}-1)E(K_{\infty, w_i})[p^m]$. For sufficiently large $n$ we have $E(K_{\infty, w_i})[p^m]=E(K_{n, v_i})[p^m]$, so $(g^{p^n}-1)E(K_{\infty, w_i})[p^m]=\{0\}$ i.e. $H^1(\Gal(K_{\infty, w_i}/K_{n, v_i}), E(K_{\infty, w_i})[p^m])=E(K_{\infty, w_i})[p^m]$. For such sufficiently large $n' \geq n \geq M$ one can check that the corestriction map from $H^1(\Gal(K_{\infty, w_i}/K_{n', v_i}), E(K_{\infty, w_i})[p^m])$ to $H^1(\Gal(K_{\infty, w_i}/K_{n, v_i}), E(K_{\infty, w_i})[p^m])$ is the identity map on $E(K_{\infty, w_i})[p^m]$. This shows that $\ker \Xi''=\bigoplus_{i=1}^m T_p(E(K_{\infty, w_i}))$ (where $T_p(E(K_{\infty, w_i}))$ means the Tate module of $E(K_{\infty, w_i})$). It follows that $\ker \Xi''$ is $\Lambda$-torsion as desired. A similar proof shows that $\ker \Xi'$ is $\Lambda$-torsion, whence $\ker \Xi$ is $\Lambda$-torsion. This completes the proof of the equality (\ref{firstrankequality}) thereby finally finishing the proof of the theorem.
\end{proof}

Throughout the rest of the section let $K$ be a number field, $E$ an elliptic curve defined over $K$ and $p$ a rational prime such that $E$ has good supersingular reduction at all primes of $K$ above $p$. Let $K_{\infty}/K$ be a $\Zp$-extension such that every prime of $K$ above $p$ ramifies. We will assume that (i) $E(K_{\infty})[p^{\infty}]$ is finite  and (ii) $H^2(G_S(K_{\infty}), E[p^{\infty}])=\{0\}$. Finally we let $S_n$ and $S_{\infty}$ be the set of primes of $K_n$ and $K_{\infty}$ above the primes in $S$, respectively.

The first key result is

\begin{proposition}\label{cohomology_group_vanishing_prop}
For any prime $w$ of $K_{\infty}$ above $p$ we have $H^1(K_{\infty, w}, E)[p^{\infty}]=0$
\end{proposition}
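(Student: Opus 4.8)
The plan is to reduce, by means of the Kummer sequence together with the reduction exact sequence of the formal group, to a cohomological vanishing statement for the formal group of $E$ over the deeply ramified local field $K_{\infty,w}$, and then to invoke the theory of deeply ramified extensions. Throughout, fix the prime $w$, let $v$ be the prime of $K$ below $w$, and write $F=K_v$, write $F_n$ for the completion of $K_n$ at the prime of $K_n$ below $w$, and $F_\infty=K_{\infty,w}=\bigcup_n F_n$. Since every prime of $K$ above $p$ ramifies in $K_\infty/K$, the extension $F_\infty/F$ is a ramified $\Zp$-extension; in particular it is an infinite pro-$p$ extension, so $cd_p(F_\infty)\le 1$ by \cite{NSW} theorem 7.1.8(i) (the same input already used in the proof of theorem \ref{fine_Selmer_theorem}), and the residue field $\kappa$ of $F_\infty$ is finite.

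The first step is a reduction to a finiteness statement. From $cd_p(F_\infty)\le 1$ we get $H^2(F_\infty,E[p^m])=0$ for all $m$, so the Kummer sequence $0\to E[p^m]\to E\xrightarrow{p^m}E\to 0$ shows that $H^1(F_\infty,E)$, and hence also its $p$-primary part $H^1(F_\infty,E)[p^\infty]$, is $p$-divisible. A $p$-divisible torsion group which is finite is zero, so it will be enough to prove that $H^1(F_\infty,E)[p^\infty]$ is finite.

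The second step passes to the formal group. Let $\hat E$ be the formal group of $E$ over $\cO_F$ and $\tilde E$ the reduction, and consider the exact sequence of $G_{F_\infty}$-modules
$$0\longrightarrow \hat E(\mathfrak m_{\overline{F_\infty}})\longrightarrow E(\overline{F_\infty})\longrightarrow \tilde E(\overline{\kappa})\longrightarrow 0.$$
Because $E$ has supersingular reduction at $v$, every $p$-power torsion point of $E$ reduces to $0$, i.e. $E[p^\infty]\subseteq \hat E(\mathfrak m_{\overline{F_\infty}})$; equivalently $\tilde E(\overline{\kappa})$ has no $p$-torsion, so $H^1(F_\infty,\tilde E(\overline{\kappa}))$ has trivial $p$-primary component. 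Passing to $p$-primary parts in the long exact cohomology sequence then shows that $H^1(F_\infty,E)[p^\infty]$ is a quotient of $H^1(F_\infty,\hat E(\mathfrak m_{\overline{F_\infty}}))$, so it suffices to prove that this last group is finite (I in fact expect it to vanish).

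This final point is where supersingularity and the hypothesis that $F_\infty/F$ is ramified enter essentially, and I expect it to be the main obstacle. A ramified $\Zp$-extension of a local field is deeply ramified in the sense of Coates and Greenberg, and for the formal group of an elliptic curve with good reduction over a deeply ramified extension the relevant degree-one cohomology vanishes; equivalently, by local duality $H^1(F_\infty,\hat E(\mathfrak m_{\overline{F_\infty}}))$ is Pontryagin dual to the module of universal norms $\ilim_n \hat E(\mathfrak m_{F_n})$ taken along the norm maps, and this vanishes because $\hat E$ has height $2$ whereas $F_\infty/F$ is only one-dimensional, so that no non-trivial norm-coherent sequence of formal-group points can exist. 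The difficulty is precisely that in the supersingular case the formal group has height $2$ and no \'etale quotient, so the standard ordinary-reduction arguments — which build a large module of universal norms out of the \'etale part — do not apply; the input needed is the deep-ramification theory of Coates and Greenberg, or, alternatively, explicit valuation estimates for norms of points of a height-two formal group along the tower.
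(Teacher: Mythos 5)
Your argument is correct and is essentially the paper's own proof: the paper merely notes that $K_{\infty,w}/K_v$ is deeply ramified in the sense of Coates--Greenberg and refers to Greenberg's exposition, which consists precisely of your two main steps --- the reduction exact sequence $0\to\hat E(\mathfrak{m})\to E\to\tilde E\to 0$ together with supersingularity (so that $H^1(K_{\infty,w},\tilde E(\overline{\kappa}))$ has trivial $p$-primary part), combined with the Coates--Greenberg vanishing $H^1(K_{\infty,w},\hat E(\mathfrak{m}_{\overline{K_{\infty,w}}}))=0$ for deeply ramified extensions. Two minor remarks: the preliminary reduction to finiteness via $p$-divisibility is superfluous, since the formal-group cohomology vanishes outright and $H^1(K_{\infty,w},E)[p^{\infty}]$ then injects into the ($p$-torsion-free) cohomology of $\tilde E(\overline{\kappa})$; and the closing ``universal norms'' discussion should be regarded as heuristic motivation rather than an alternative proof --- the actual input is the Coates--Greenberg theorem (or the underlying valuation estimates), exactly as you say.
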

\begin{proof}
Let $v$ be the prime of $S$ below $w$. Since by assumption $v$ ramifies in $K_{\infty}/K$ therefore the extension $K_{\infty, w}/K_v$ is deeply ramified in the sense of \cite{CG}. Therefore the result follows as explained in \cite{Gb_LNM} pg. 70.
\end{proof}

Now we need
\begin{proposition}\label{surjectivity_prop}
The map $H^1(G_S(K_{\infty}), E[p^{\infty}]) \xrightarrow{\psi_{\infty}} \underset {w \in S_{\infty}} \bigoplus H^1(K_{\infty,w}, E)[p^{\infty}]$ is surjective
\end{proposition}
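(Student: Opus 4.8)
The plan is to compute $\coker(\psi_\infty)$ by Poitou--Tate duality. After reducing to finite levels and finite coefficients I will, via the nine-term Poitou--Tate sequence and the Kummer sequence, pseudo-identify $\coker(\psi_\infty)^*$ with a compact (``$\Lambda$-adic'') Selmer group of $E$ over $K_\infty$, and then show this group vanishes using the supersingular reduction together with hypotheses (i) and (ii).

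For the reduction: over $G_S(K_\infty)$ and over every completion $K_{\infty,w}$ the Kummer sequence is right exact, since $E(F_S)$ and the $E(\overline{K_{\infty,w}})$ are $p$-divisible. Hence $H^1(G_S(K_\infty),E[p^\infty])\twoheadrightarrow H^1(G_S(K_\infty),E)[p^\infty]$ and likewise locally, and a diagram chase gives $\coker(\psi_\infty)=\coker\big(H^1(G_S(K_\infty),E)[p^\infty]\to\bigoplus_w H^1(K_{\infty,w},E)[p^\infty]\big)$. As direct limits are exact this equals $\dlim_n\coker(\psi_n)$ for the corresponding maps $\psi_n$ over $K_n$; and since $H^1(G_S(K_n),E[p^m])\twoheadrightarrow H^1(G_S(K_n),E[p^\infty])[p^m]$, one has $\coker(\psi_n)=\dlim_m\coker(\psi_{n,m})$ with $\psi_{n,m}\colon H^1(G_S(K_n),E[p^m])\to\bigoplus_{v\in S_n}H^1(K_{n,v},E)[p^m]$.

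Now fix $n,m$ and apply the nine-term Poitou--Tate sequence to $E[p^m]$, self-dual via the Weil pairing. The image of $H^1(G_S(K_n),E[p^m])$ in $V:=\bigoplus_v H^1(K_{n,v},E[p^m])$ is its own orthogonal complement, and the subgroup $W:=\bigoplus_v\big(E(K_{n,v})/p^m\big)$ of local Kummer images is maximal isotropic: it equals $W^\perp$ because $E(K_{n,v})/p^m$ and its quotient $H^1(K_{n,v},E)[p^m]$ are exact annihilators of one another under local Tate duality and have equal order. Since the target of $\psi_{n,m}$ is $V/W$, the cokernel $V/(\img+W)$ has Pontryagin dual $\img\cap W$, which is the image of $\Selm_{p^m}(E/K_n)$ under localization and so is isomorphic to $\Selm_{p^m}(E/K_n)/\Sha^1(G_S(K_n),E[p^m])$. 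Passing to inverse limits (over $m$, dual to multiplication by $p$; then over $n$, dual to corestriction; inverse limits of systems of finite groups are exact and carry no higher derived limit) gives $\coker(\psi_\infty)^*\cong\ilim_{n,m}\big(\Selm_{p^m}(E/K_n)/\Sha^1(G_S(K_n),E[p^m])\big)$. By the restriction--corestriction compatibility of the pairing (\ref{pairing2}), $\ilim_{n,m}\Sha^1(G_S(K_n),E[p^m])\cong\Sha^2(G_S(K_\infty),E[p^\infty])^*=0$, the last equality because $\Sha^2(G_S(K_\infty),E[p^\infty])\subseteq H^2(G_S(K_\infty),E[p^\infty])=\{0\}$ by hypothesis (ii). Hence $\coker(\psi_\infty)^*\cong\ilim_{n,m}\Selm_{p^m}(E/K_n)$, the compact Selmer group of $E$ over $K_\infty$.

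It remains to show this compact Selmer group is zero. Hypothesis (i) bounds $E(K_\infty)[p^\infty]$, which allows one to identify $\ilim_m\Selm_{p^m}(E/K_n)$ with the Bloch--Kato Selmer module inside $H^1(G_S(K_n),T_pE)$, cut out by requiring each localization to lie in $E(K_{n,v})\otimes\Zp$; in the limit over $n$ the conditions become membership, at each $w\in S_\infty$, in the module of universal local norms $\ilim_n\big(E(K_{n,w})\otimes\Zp\big)$. At the primes above $p$ this module is $0$ precisely because $E$ has supersingular reduction there (the formal group has height two; Perrin-Riou, Kobayashi); at the primes $w\nmid p$ that do not split completely in $K_\infty/K$ it is $0$ because the norm maps on the finite groups $E(K_{n,w})\otimes\Zp$ become multiplication by $p$; and at the primes $w\nmid p$ that split completely one uses that a compatible class must satisfy the condition simultaneously at all of the infinitely many primes of $K_\infty$ above the corresponding prime of $K$ to conclude it is zero. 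Thus $\ilim_{n,m}\Selm_{p^m}(E/K_n)=0$ and $\psi_\infty$ is surjective. I expect this last point---the primes of bad reduction that split completely in $K_\infty/K$---to be the main obstacle: it is exactly the configuration the introduction singles out as the one where a prime-by-prime comparison with Wingberg's result breaks down, and handling it appears to require global input from the $\Zp$-tower rather than a purely local argument.
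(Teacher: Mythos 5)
Your first two paragraphs are sound and in fact recover, by hand, exactly the intermediate statement the paper gets from the Cassels--Poitou--Tate sequence: under hypothesis (ii), $\coker(\psi_\infty)^*$ is the compact Selmer group $\mathfrak{S}(E/K_\infty)=\ilim_{n,m}\Selm_{p^m}(E/K_n)$. (Your derivation via maximal isotropic subgroups in the nine-term sequence is a legitimate re-proof of that sequence, and the use of exactness of inverse limits of finite groups to discard $\ilim\Sha^1$ is fine.) The problem is the last step. Your plan to kill $\mathfrak{S}(E/K_\infty)$ by showing that the local universal-norm modules vanish at every prime of $S$ fails exactly where you fear it does: if $v\nmid p$ is a bad prime that splits completely in $K_\infty/K$, then $K_{n,w}=K_v$ for all $n$ and all $w\mid v$, the local corestriction maps are identities, and the universal norm module is all of $\ilim_m E(K_v)/p^m$, i.e.\ the full (finite, but in general nonzero) $p$-primary torsion of $E(K_v)$. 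So the localization of a norm-compatible class at such a $v$ is not forced to vanish, and the sentence ``one uses that a compatible class must satisfy the condition simultaneously at all of the infinitely many primes'' is not an argument --- the local conditions at the infinitely many primes above $v$ are all the same finite condition and impose nothing new. (Separately, even at $w\mid p$ the vanishing of universal norms for a supersingular formal group over an arbitrary ramified $\Zp$-extension, as opposed to the cyclotomic one, is a nontrivial citation you would need to discharge.)

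The paper avoids localizing the compact Selmer group altogether. It plays off two global facts: (a) $\coker\psi_\infty$ is a quotient of $J=\bigoplus_{w\in S_\infty}H^1(K_{\infty,w},E)[p^\infty]$, and $J^*$ is a finitely generated torsion $\Lambda$-module --- here the completely split bad primes are harmless, because for such $v$ the module $\bigoplus_{w\mid v}H^1(K_{\infty,w},E)[p^\infty]$ is induced from the finite group $H^1(K_v,E)[p^\infty]$, hence cotorsion (killed by a power of $p$), and the primes above $p$ contribute nothing by Proposition \ref{cohomology_group_vanishing_prop}; (b) $\mathfrak{S}(E/K_\infty)$ is $\Lambda$-torsion-free, because the restriction map $\Xi$ embeds it into $\ilim_{n,m}\Selm_{p^m}(E/K_\infty)^{\Gamma_n}$, which in turn embeds into the free $\Lambda$-module $\ilim_{n,m}\Selinf(E/K_\infty)[p^m]^{\Gamma_n}$; the injectivity of $\Xi$ is where hypothesis (i) (finiteness of $E(K_\infty)[p^\infty]$) is actually used, a use entirely absent from your sketch. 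A module that is both $\Lambda$-torsion and $\Lambda$-torsion-free is zero. This torsion-versus-torsion-free squeeze is the ``global input from the $\Zp$-tower'' your argument is missing, and without it or a substitute your proof does not close at the completely split bad primes.
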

\begin{proof}
To understand $\coker \psi_{\infty}$ we use the Cassels-Poitou-Tate exact sequence \cite{CS2}. First for any $n$ and $m$ we define $\Selm_{p^m}(E/K_n)$ as
$$\displaystyle 0 \longrightarrow \Selm_{p^m}(E/K_n) \longrightarrow H^1(G_S(K_n), E[p^m]) \longrightarrow \prod_{v \in S_n} H^1(K_{n,v}, E)[p^m]$$

Then for any $n \geq 0$ the Cassels-Poitou-Tate exact sequence is
$$H^1(G_S(K_n), E[p^{\infty}]) \xrightarrow{\psi_n} \underset {v \in S_n} \bigoplus H^1(K_{n,v}, E)[p^{\infty}] \rightarrow \mathfrak{S}(E/K_n)^* \rightarrow H^2(G_S(K_n), E[p^{\infty}])$$
where $\mathfrak{C}(E/K_n) = \ilim_m \Selm_{p^m}(E/K_n) \subseteq H^1(G_S(K_n), T_p(E))$ (inverse limit with respect to multiplication-by-$p$).

Taking the direct limit of the above sequence with respect to restriction over $n$ we get

$$H^1(G_S(K_{\infty}), E[p^{\infty}]) \xrightarrow{\psi_{\infty}} \underset {w \in S_{\infty}} \bigoplus H^1(K_{\infty,w}, E)[p^{\infty}] \rightarrow \mathfrak{S}(E/K_{\infty})^* \rightarrow H^2(G_S(K_{\infty}), E[p^{\infty}])$$
where $\mathfrak{S}(E/K_{\infty}) = \ilim_{n,m} \Selm_{p^m}(E/K_n) \subseteq \ilim_n H^1(G_S(K_n), T_p(E))$ (inverse limit over $n$ with regards to corestriction and over $m$ with regards to multiplication-by-$p$).

By assumption, we have $H^2(G_S(K_{\infty}), E[p^{\infty}])=\{0\}$ and so we see from the above sequence that $\coker \psi_{\infty}$ is isomorphic to $\mathfrak{S}(E/K_{\infty})^*$. We will show that $\coker \psi_{\infty}=0$ by showing that the Pontryagin dual of $\coker \psi_{\infty}$ is $\Lambda$-torsion while $\mathfrak{S}(E/K_{\infty})$ is $\Lambda$-torsion-free.

First we show that $\coker \psi_{\infty}$ is $\Lambda$-cotorsion. We will also prove that it is cofinitely generated over $\Lambda$ since we will need this fact later. We do this by actually showing that $J:=\underset {w \in S_{\infty}} \bigoplus H^1(K_{\infty,w}, E)[p^{\infty}]$ is cofinitely generated cotorsion over $\Lambda$. Note that by proposition \ref{cohomology_group_vanishing_prop} we may (and will) assume that $S_{\infty}$ contains no primes above $p$. We will also assume that $S_{\infty}$ contains no complex archimedean primes since they contribute nothing to the group $J$. Write $S_{\infty} = T \, \dotcup \, T'$  where $T$ is the set of all the primes of $K_{\infty}$ above those primes of $S$ that do not split completely in $K_{\infty}/K$ and $T'$ is its complement containing all primes of $S_{\infty}$ that lie above a prime of $K$ that splits completely in $K_{\infty}/K$. Let $J_T:=\underset {w \in T} \bigoplus H^1(K_{\infty,w}, E)[p^{\infty}]$ and $J_{T'}:=\underset {w \in T'} \bigoplus H^1(K_{\infty,w}, E)[p^{\infty}]$ so that $J=J_T \times J_{T'}$. For any $w \in T$ by \cite{Gb_IPR} prop. 2 $H^1(K_{\infty,w}, E[p^{\infty}])$ is a cofinitely generated $\Zp$-module and hence the same is true for $H^1(K_{\infty, w}, E)[p^{\infty}]$ as this group is a quotient of $H^1(K_{\infty, w}, E[p^{\infty}])$. Therefore $J_T$ is a cofinitely generated $\Zp$-module.

Now we deal with $J_{T'}$. Let $S'$ be the set of primes of $K$ that split completely in $K_{\infty}/K$. For any such prime $v \in S'$ we let $J_v:=\underset {w |v} \bigoplus H^1(K_{\infty,w}, E)[p^{\infty}]$ where the sum runs over all primes of $K_{\infty}$ above $v$. Clearly $J_{T'}=\underset {v \in S'} \bigoplus J_v$. By Shapiro's lemma, for any $v \in S'$, we have $J_v^{\Gamma}=H^1(K_v, E)[p^{\infty}]$. If $v$ is archimedean, then $H^1(K_v, E)$ is finite (see \cite{GH} prop. 1.3). If $v$ is non-archimedean, then by Tate duality for abelian varieties over local fields (\cite{Milne} I-3.4): $H^1(K_v, E)^* \cong E(K_v)$ so $H^1(K_v, E)[p^{\infty}]^* \cong \ilim E(K_v)/p^m$.

By Mattuck's theorem $E(K_v)=\Z_l^{[K_v:\Q_l]}\times T$ where $l \neq p$ is the characteristic of the residue field of $K_v$ and $T$ is a finite group. It follows that $\ilim E(K_v)/p^m$ is the finite $p$-primary subgroup of $E(K_v)$. So $H^1(K_v, E)$ is finite in the non-archimedean case also. This proves that $J_v^{\Gamma}=H^1(K_v, E)[p^{\infty}]$ is finite which shows that $J_{T'}$ is cofinitely generated over $\Lambda$. Also for any $w \in T'$ we have $H^1(K_{\infty,w}, E)[p^{\infty}]=H^1(K_v, E)[p^{\infty}]$ where $v$ is the prime of $K$ below $w$ and by what we just showed this is a finite group. All together this shows that $J_{T'}$ is a cofinitely generated $\Lambda$-module that is annihilated by some power of $p$.

Thus we have shown that $J=A \times B$ (decomposition as cofinitely generated $\Lambda$-modules) where $A$ is a cofinitely generated $\Zp$-module and $B$ is a torsion $\Zp$-module that is annihilated by some power of $p$. This shows that $J^*$ is a finitely generated $\Lambda$-torsion module and hence the same is true of $\coker \psi_{\infty}$.

Now we prove that $\mathfrak{S}(E/K_{\infty})$ is $\Lambda$-torsion-free. First consider the groups $Y':=\ilim_{n,m} \Selm_{p^m}(E/K_{\infty})^{\Gamma_n}$ and the group $Y:=\ilim_{n,m} \Selinf(E/K_{\infty})[p^m]^{\Gamma_n}$. We claim that $Y'$ injects into $Y$. To see this, note that the natural map from $\Selm_{p^m}(E/K_{\infty})$ to $\Selinf(E/K_{\infty})[p^m]$ has kernel $E(K_{\infty})[p^{\infty}]/p^m$. This map induces a map from $Y'$ to $Y$ with kernel $Z:=\ilim_{n,m} (E(K_{\infty})[p^{\infty}]/p^m)^{\Gamma_n}$. Since the groups $E(K_{\infty})[p^{\infty}]/p^m$ are finite and bounded as $m$ varies therefore $Z=0$ (for large enough $n$ $\Gamma_n$ acts trivially on $E(K_{\infty})[p^{\infty}]/p^m$ and hence the norm maps in the inverse limit eventually become multiplication-by-$p$). So we see that in fact $Y'$ injects into $Y$. By \cite{NSW} prop 5.5.10 $Y$ is a free $\Lambda$-module. This implies that $Y'$ is $\Lambda$-torsion-free. Now consider the commutative diagram with vertical maps induced by restriction

\begin{equation*}
\xymatrix {
Y' \ar@{^{ (}->}[r] & \ilim_{n,m} H^1(G_S(K_{\infty}), E[p^m])^{\Gamma_n}\\
\mathfrak{S}(E/K_{\infty}) \ar[u]_{\Xi} \ar@{^{ (}->}[r] & \ilim_{n,m} H^1(G_S(K_n), E[p^m]) \ar[u]_{\Xi'} }
\end{equation*}
Since $Y'$ is $\Lambda$-torsion-free, therefore to show that $\mathfrak{S}(E/K_{\infty})$ is $\Lambda$-torsion-free, it will suffice to show that the map $\Xi$ is an injection. From the commutative diagram this will be shown once we show that $\Xi'$ is an injection. We have

$$\ker \Xi'= \ilim_n \ilim_m H^1(\Gamma_n, E(K_{\infty})[p^m])$$
Since by assumption $E(K_{\infty})[p^{\infty}]$ is finite, it follows that for any $n \geq 0$ that $\ilim_m H^1(\Gamma_n, E(K_{\infty})[p^m])=0$ and hence $\ker \Xi'=0$. This completes the proof.
\end{proof}

The next 2 lemmas are the most important ingredients in our proof

\begin{lemma}\label{mu_lemma}
We have a pseudo-isomorphism $$\ilim_{n,m} \Sha^2(G_S(K_{\infty}), E[p^m])^{\Gamma_n} \sim \dot{T}_{\mu}(\Selinf(E/K_{\infty})^*)$$
\end{lemma}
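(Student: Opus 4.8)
The plan is to relate the left-hand side to something involving $\Selinf(E/K_\infty)^*$ via Poitou--Tate duality, and then apply Wingberg's Lemma \ref{inverse_limit_lemma}(i). By the restriction--corestriction compatibility of the pairing (\ref{pairing2}), the Pontryagin dual of $\Sha^2(G_S(K_\infty), E[p^m])$ is $\ilim_n \Sha^1(G_S(K_n), E[p^m])$ with transition maps corestriction over $n$; dualising again and taking $\Gamma_n$-invariants, one sees that $\ilim_{n,m}\Sha^2(G_S(K_\infty), E[p^m])^{\Gamma_n}$ is, up to the usual reindexing, the Pontryagin dual of $\ilim_{n,m}\Sha^1(G_S(K_n), E[p^m])^*_{\Gamma_n}$-type object; the cleanest route is to observe that $\Sha^2(G_S(K_\infty), E[p^m])^*$ is a subquotient of the $\Lambda$-module built from $\Selinf(E/K_\infty)^*$ and then pin down exactly the $\dot{T}_\mu$ part. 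So first I would set up the diagram comparing $\Sha^2$ to the full $H^2$, using that $H^2(G_S(K_\infty), E[p^\infty]) = 0$ (condition (ii)): this should let me replace $\Sha^2$ with a kernel of a localisation map at finite level and control the $S_\infty$-local contributions, which by the arguments already used in the proof of Theorem \ref{fine_Selmer_theorem} are $\Lambda$-pseudo-null after passing to the inverse limit.

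Next I would bring in the Cassels--Poitou--Tate sequence together with Proposition \ref{surjectivity_prop}. Since $\psi_\infty$ is surjective and $H^2(G_S(K_\infty), E[p^\infty]) = 0$, the Selmer group $\Selinf(E/K_\infty)$ sits in a short exact sequence $0 \to \Selinf(E/K_\infty) \to H^1(G_S(K_\infty), E[p^\infty]) \to \bigoplus_{w} H^1(K_{\infty,w}, E)[p^\infty] \to 0$. Dualising and feeding this into the $\ilim_{n,m}(-^*/p^m)^{\Gamma_n}$ functor of Lemma \ref{inverse_limit_lemma}(i), the outer terms $H^1(G_S(K_\infty), E[p^\infty])^*$ and $\bigl(\bigoplus_w H^1(K_{\infty,w}, E)[p^\infty]\bigr)^*$ need to be understood: the first has no $\Zp$-torsion quotient issues because $H^1(G_S(K_\infty), E[p^\infty])$ is $\Lambda$-cofree (by \cite{Gb_IPR} prop. 4, as used in Theorem \ref{fine_Selmer_theorem}), so its $\dot{T}_\mu$ vanishes; for the local term I would use the cofinite-generation and structure analysis already carried out in the proof of Proposition \ref{surjectivity_prop} (the $J = A \times B$ decomposition) to show its contribution to the $\mu$-part is pseudo-null. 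Then the snake lemma in Lemma \ref{inverse_limit_lemma} identifies $\dot{T}_\mu(\Selinf(E/K_\infty)^*)$ with $\ilim_{n,m}(\Selinf(E/K_\infty)^*/p^m)^{\Gamma_n}$, and I would match this term-by-term against $\ilim_{n,m}\Sha^2(G_S(K_\infty), E[p^m])^{\Gamma_n}$ using the dualities set up in the first paragraph.

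Concretely, the key steps in order: (1) use Poitou--Tate (\ref{pairing2}) plus restriction--corestriction to rewrite $\ilim_{n,m}\Sha^2(G_S(K_\infty), E[p^m])^{\Gamma_n}$ as (the dual of) an inverse limit of $\Sha^1$'s at finite level, or directly as $\ilim_{n,m}(\Sha^1(G_S(K_\infty),E[p^\infty])^*/p^m)^{\Gamma_n}$-flavoured object; (2) via the Kummer/snake-lemma sequences (\ref{FSG_sequence1})--(\ref{FSG_sequence2}) relate $\Sha^1(G_S(K_\infty), E[p^m])$ to $\Sha^1(G_S(K_\infty), E[p^\infty])[p^m]$ modulo the finite bounded groups $\mathcal M_{p^m}, \mathcal C_{p^m}$, which disappear in the pseudo-isomorphism; (3) use the short exact sequence from Proposition \ref{surjectivity_prop} and $\Lambda$-cofreeness of $H^1(G_S(K_\infty), E[p^\infty])$ to identify the relevant $\ilim_{n,m}(-/p^m)^{\Gamma_n}$ with that of $\Selinf(E/K_\infty)^*$, checking the local term contributes nothing to the $\mu$-part; (4) invoke Lemma \ref{inverse_limit_lemma}(i) to conclude. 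The main obstacle I expect is step (3): controlling the local term $\bigl(\bigoplus_{w\in S_\infty} H^1(K_{\infty,w}, E)[p^\infty]\bigr)^*$ under the functor $\ilim_{n,m}(-/p^m)^{\Gamma_n}$, in particular showing the split-completely primes in $T'$ (which give $p$-power-torsion $\Lambda$-modules, not finitely generated over $\Zp$) do not produce a spurious $\dot{T}_\mu$-contribution — this is exactly where the hypothesis that primes above $p$ ramify (Proposition \ref{cohomology_group_vanishing_prop}, removing those primes from $S_\infty$) and the finiteness of $E(K_\infty)[p^\infty]$ must be used carefully.
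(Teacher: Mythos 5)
Your steps (3)--(4) contain the skeleton of the correct argument, but as written the proposal has real gaps. First, steps (1) and (2) are misdirected: Poitou--Tate duality is not the right tool here, because the object in the lemma is $\Sha^2$ of the \emph{infinite-level} group $G_S(K_\infty)$ with finite coefficients, followed by $\Gamma_n$-invariants --- this does not dualise to finite-level $\Sha^1$'s in any usable way (that duality is reserved for the very end of the proof of the main theorem, where $\ilim_{n,m}\Sha^2(G_S(K_n),E[p^m])$ appears), and the sequences (\ref{FSG_sequence1})--(\ref{FSG_sequence2}) live in the wrong cohomological degree: they compare $\Sha^1$'s, whereas here one must compare $\Sha^2(G_S(K_\infty),E[p^m])$ with $\Selinf(E/K_\infty)/p^m$. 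The mechanism your step (3) gestures at but never makes precise is to apply the Kummer sequences and the snake lemma to the localisation maps: using $H^2(G_S(K_\infty),E[p^\infty])=0$ and the surjectivity from Proposition \ref{surjectivity_prop} one obtains an exact sequence $0 \to \coker\phi_m \to \Selinf(E/K_\infty)/p^m \to \Sha^2(G_S(K_\infty),E[p^m]) \to 0$, where $\phi_m$ is the localisation map on $H^1(\cdot,E[p^m])$; only then does Lemma \ref{inverse_limit_lemma}(i) apply to the middle term after taking $(\cdot)^{\Gamma_n}$ and inverse limits.

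Second, the step you yourself flag as the ``main obstacle'' is precisely the hard content of the lemma, and you do not resolve it. One must show that $\ilim_{n,m}(\coker\phi_m)^{\Gamma_n}$ vanishes and that the image of the boundary map into $\ilim_{n,m}(\coker\phi_m)_{\Gamma_n}$ is finite. Asserting that the local term ``contributes nothing to the $\mu$-part'' is exactly what needs proof: the argument uses the decomposition $J=A\times B$ with $A$ cofinitely generated over $\Zp$ and $B$ killed by $p^t$, the fact that $A[p^{m'}]$ is finite and hence fixed by $\Gamma_{n'}$ for large $n'$ (so the norm transition maps eventually annihilate the $A$-component in the invariants limit), and multiplication by $p^t$ in the $m$-direction to kill the $B$-component; the finiteness of the boundary image further requires showing that $\ilim_{n,m}\Sha^2(G_S(K_\infty),E[p^m])^{\Gamma_n}$ is $\Zp$-torsion, via its injection into $\ilim_{n,m}\bigl(H^1(G_S(K_\infty),E[p^\infty])/p^m\bigr)^{\Gamma_n}$ and Lemma \ref{inverse_limit_lemma}(i). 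Incidentally, your appeal to $\Lambda$-cofreeness of $H^1(G_S(K_\infty),E[p^\infty])$ via \cite{Gb_IPR} prop.\ 4 is a misattribution (that proposition concerns $H^2$) and is not needed; cofinite generation over $\Lambda$ suffices for every step where this group enters.
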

\begin{proof}
The exact Kummer sequences
\begin{equation}\label{Kummer_sequence1}
0 \rightarrow E[p^m] \rightarrow E \xrightarrow{p^m} E \rightarrow0
\end{equation}
and
\begin{equation}\label{Kummer_sequence2}
0 \rightarrow E[p^m] \rightarrow E[p^{\infty}] \xrightarrow{p^m} E[p^{\infty}] \rightarrow0
\end{equation}
yield a commutative diagram

\begin{equation}
\xymatrix{
0 \ar[r] & p^m \underset{w \in S_{\infty}}{\bigoplus} H^1(K_{\infty,w}, E)[p^{\infty}] \ar[r] & \underset{w \in S_{\infty}}{\bigoplus} H^1(K_{\infty,w}, E)[p^{\infty}] \ar[r] & \underset{w \in S_{\infty}}{\bigoplus} H^2(K_{\infty,w}, E[p^m])\\
0 \ar[r] & p^m H^1(G_S(K_{\infty}), E[p^{\infty}]) \ar[u]_{\psi} \ar[r] & H^1(G_S(K_{\infty}), E[p^{\infty}]) \ar[u]_{\psi'} \ar[r]
& H^2(G_S(K_{\infty}), E[p^m]) \ar[u]_{\psi''} \ar[r] & 0
}
\end{equation}
where the 0 at the right of the lower sequence is because $H^2(G_S(K_{\infty}), E[p^{\infty}])=0$. Since $\ker \psi'=\Selinf(E/K_{\infty})$, $\ker \psi''=\Sha^2(G_S(K_{\infty}), E[p^m])$ and $\psi$ is surjective by proposition \ref{surjectivity_prop}, therefore by the snake lemma we get the following exact sequence
\begin{equation}\label{mu_sequence1}
0 \to \ker \psi \to \Selinf(E/K_{\infty}) \to \Sha^2(G_S(K_{\infty}), E[p^m]) \to 0
\end{equation}

Now consider the following commutative diagram
\begin{equation}
\xymatrix{
0 \ar[r] & \underset{w \in S_{\infty}}{\bigoplus} H^1(K_{\infty,w}, E)[p^m]\ar[r] & \underset{w \in S_{\infty}}{\bigoplus} H^1(K_{\infty,w}, E)[p^{\infty}] \ar[r]^{p^m} & p^m\underset{w \in S_{\infty}}{\bigoplus} H^1(K_{\infty,w}, E)[p^{\infty}]\\
&H^1(G_S(K_{\infty}), E[p^m]) \ar[u]_{\phi_m} \ar[r] & H^1(G_S(K_{\infty}), E[p^{\infty}]) \ar[u]_{\psi'} \ar[r]^{p^m}
& p^m H^1(G_S(K_{\infty}), E[p^{\infty}]) \ar[u]_{\psi} \ar[r] & 0
}
\end{equation}
Since $\ker \psi'=\Selinf(E/K_{\infty})$ and $\psi'$ is surjective by proposition \ref{surjectivity_prop}, therefore by the snake lemma we get an exact sequence
\begin{equation}
0 \to p^m\Selinf(E/K_{\infty}) \to \ker \psi \to \coker \phi_m \to 0
\end{equation}
This sequence in turn gives the following exact sequence
\begin{equation}\label{mu_sequence2}
0 \to \coker \phi_m \to \Selinf(E/K_{\infty})/p^m \to \Selinf(E/K_{\infty})/\ker \psi \to 0
\end{equation}
From the sequences (\ref{mu_sequence1}) and (\ref{mu_sequence2}) we get the following exact sequence
\begin{equation}
0 \to \coker \phi_m \to \Selinf(E/K_{\infty})/p^m \to \Sha^2(G_S(K_{\infty}), E[p^m]) \to 0
\end{equation}
For any $n \geq 0$ this sequence induces another exact sequence
\begin{align}\label{mu_sequence3}
\begin{aligned}
0 &\to (\coker \phi_m)^{\Gamma_n} \to (\Selinf(E/K_{\infty})/p^m)^{\Gamma_n} \to \Sha^2(G_S(K_{\infty}), E[p^m])^{\Gamma_n} \\
&\to (\coker \phi_m)_{\Gamma_n}
\end{aligned}
\end{align}
We claim that each of the terms in this exact sequence is finite. Clearly it will suffice to prove that the second term and fourth term are finite. Since $\Selinf(E/K_{\infty})^*$ is a finitely generated $\Lambda$-module (see \cite{Manin} theorem 4.5), the finiteness of the second term is easily seen by taking Pontryagin duals. Also in the proof of proposition \ref{surjectivity_prop} we have shown that $J:=\underset {w \in S_{\infty}} \bigoplus H^1(K_{\infty,w}, E)[p^{\infty}]$ is a cofinitely generated $\Lambda$-module and hence by considering Pontryagin duals $J[p^m]_{\Gamma_n}$ is finite. Since $J[p^m]_{\Gamma_n}$ surjects onto $(\coker \phi_m)_{\Gamma_n}$, it follows that the fourth term is finite. Therefore we have seen that all the terms in the sequence (\ref{mu_sequence3}) are finite so by taking inverse limits the sequence remains exact
\begin{align}\label{mu_sequence4}
\begin{aligned}
0 &\to \ilim_{n,m}(\coker \phi_m)^{\Gamma_n} \to \ilim_{n,m}(\Selinf(E/K_{\infty})/p^m)^{\Gamma_n} \to \ilim_{n,m}\Sha^2(G_S(K_{\infty}), E[p^m])^{\Gamma_n} \\
&\xrightarrow{\uptheta} \ilim_{n,m}(\coker \phi_m)_{\Gamma_n}
\end{aligned}
\end{align}
By lemma \ref{inverse_limit_lemma}(i) the second term in the above sequence is pseudo-isomorphic to $\dot{T}_{\mu}(\Selinf(E/K_{\infty})^*)$. Therefore to prove the lemma it will suffice to show that the first term and $\img \uptheta$ in the above sequence are  both finite.

First we deal with $\img \uptheta$. Consider the group $J:=\underset {w \in S_{\infty}} \bigoplus H^1(K_{\infty,w}, E)[p^{\infty}]$. By lemma \ref{inverse_limit_lemma}(ii) $\ilim_{n,m} (J[p^m])_{\Gamma_n} \sim \dot{T}_{\lambda}(J^*)$. Hence $\ilim_{n,m} (J[p^m])_{\Gamma_n}$ is a finitely generated $\Zp$-module. Now $\ilim_{n,m} (J[p^m])_{\Gamma_n}$ surjects onto $\ilim_{n,m} (\coker \phi_m)_{\Gamma_n}$ and so it follows that $\ilim_{n,m} (\coker \phi_m)_{\Gamma_n}$ is also a finitely generated $\Zp$-module.

We now prove that the group $\ilim_{n,m}\Sha^2(G_S(K_{\infty}), E[p^m])^{\Gamma_n}$ is a torsion $\Zp$-module. Taking into account the fact that $H^2(G_S(K_{\infty}), E[p^{\infty}])=0$, the Kummer sequence (\ref{Kummer_sequence2}) gives an isomorphism $H^1(G_S(K_{\infty}), E[p^{\infty}])/p^m \cong H^2(G_S(K_{\infty}), E[p^m])$. Combining the injection $\Sha^2(G_S(K_{\infty}), E[p^m])^{\Gamma_n} \hookrightarrow H^2(G_S(K_{\infty}), E[p^m])$ with this isomorphism we get an injection
\begin{equation}
\ilim_{n,m} \Sha^2(G_S(K_{\infty}), E[p^m])^{\Gamma_n} \hookrightarrow \ilim_{n,m} (H^1(G_S(K_{\infty}), E[p^{\infty}])/p^m)^{\Gamma_n}
\end{equation}
Lemma \ref{inverse_limit_lemma} shows that $\ilim_{n,m} (H^1(G_S(K_{\infty}), E[p^{\infty}])/p^m)^{\Gamma_n}$ is a $\Zp$-torsion module and hence from the injection above the same is true for $\ilim_{n,m} \Sha^2(G_S(K_{\infty}), E[p^m])^{\Gamma_n}$. Therefore $\img \uptheta$ is a $\Zp$-torsion module. It is also a finitely generated $\Zp$-module since $\ilim_{n,m} (\coker \phi_m)_{\Gamma_n}$ is finitely generated over $\Zp$ as shown above. This implies that $\img \uptheta$ is finite as claimed.

We now deal with $\ilim_{n,m} (\coker \phi_m)^{\Gamma_n}$. We will actually show this group is trivial. Let $J:=\underset {w \in S_{\infty}} \bigoplus H^1(K_{\infty,w}, E)[p^{\infty}]$. In the proof of proposition \ref{surjectivity_prop} we have shown that $J=A \times B$ where $A$ is a cofinitely generated $\Zp$-module and $B$ is a torsion $\Zp$-module that is annihilated by $p^t$ for some $t$. For any $m \geq 0, \coker \phi_m$ is the quotient of $J[p^m]=A[p^m] \times B[p^m]$. Let $\alpha=(\alpha_{n,m}) \in \ilim_{n,m} (\coker \phi_m)^{\Gamma_n}$. Note that the transition map on the first index is the norm map and on the second index it is multiplication-by-$p$. For each $(n,m) \in \Z_{\geq 0} \times \Z_{\geq 1}$ choose $a_{n,m} \in A[p^m]$ and $b_{n,m} \in B[p^m]$ such that $\alpha_{n,m}$ is represented by $(a_{n,m}, b_{n,m})$

Now let $(n,m) \in \Z_{\geq 0} \times \Z_{\geq 1}$. We will show that $\alpha_{n,m}=0$. Recall that $p^t$ annihilates $B$. Consider $(a_{n,m'}, b_{n,m'})$ where $m'=m+t$. We claim that $(a_{n,m'}, b_{n,m'}) \equiv (0, b')$ for some $b' \in B[p^{m'}]$ (the congruence is modulo $\img \phi_{m'}$). To see this, note that since $A$ is cofinitely generated over $\Zp$, therefore $A[p^{m'}]$ is finite and hence is fixed by $\Gamma_{n'}$ for some $n' \geq n$. Since for any $n'' > n'$ we have $\text{Tr}_{K_{n''}/K_{n'}}(\alpha_{n'',m'})=\alpha_{n',m'}$ and $\Gamma_{n'}$ acts trivially on $A[p^{m'}]$, therefore by considering large enough $n'' > n'$ we easily see that $(a_{n',m'}, b_{n',m'}) \equiv (0, b'')$ for some $b'' \in B[p^{m'}]$ and hence $(a_{n,m'}, b_{n,m'}) \equiv (0, b')$ for some $b' \in B[p^{m'}]$ as claimed.

Now we have that $p^t \alpha_{n,m'}=\alpha_{n,m}$. Since $(a_{n,m'}, b_{n,m'}) \equiv (0, b')$ and $p^t$ annihilates $B$, therefore we see that $(a_{n,m}, b_{n,m}) \equiv (0,0)$ i.e. $\alpha_{n,m}=0$. This proves that $\alpha=0$ thus showing that $\ilim_{n,m} (\coker \phi_m)^{\Gamma_n}$ is trivial. This completes the proof of the lemma.
\end{proof}

We now define $K_{n,m}$ to be the kernel of the map (induced by restriction) $H^1(G_S(K_{\infty}), E[p^m])_{\Gamma_n} \to \big(\underset{w \in S_{\infty}}{\bigoplus} H^1(K_{\infty,w}, E[p^m])\big)_{\Gamma_n}$
\begin{lemma}\label{lambda_lemma}
We have a pseudo-isomorphism $$\ilim_{n,m} K_{n,m} \sim \dot{T}_{\lambda}(\Selinf(E/K_{\infty})^*)$$
\end{lemma}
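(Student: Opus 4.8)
The plan is to carry out, for the $\Gamma_n$-coinvariants and $p^m$-torsion, the argument proving lemma \ref{mu_lemma} for the $\Gamma_n$-invariants and $p^m$-quotients. First I would apply lemma \ref{inverse_limit_lemma}(ii) to $A=\Selinf(E/K_\infty)^*$ (so that $A^*=\Selinf(E/K_\infty)$), obtaining the pseudo-isomorphism
\[
\ilim_{n,m}\big(\Selinf(E/K_\infty)[p^m]\big)_{\Gamma_n}\ \sim\ \dot{T}_{\lambda}\big(\Selinf(E/K_\infty)^*\big).
\]
Hence it suffices to exhibit a pseudo-isomorphism $\ilim_{n,m}K_{n,m}\sim\ilim_{n,m}\big(\Selinf(E/K_\infty)[p^m]\big)_{\Gamma_n}$.

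To produce it I would run the Kummer-diagram machinery of lemma \ref{mu_lemma}. Combining the two Kummer sequences (\ref{Kummer_sequence1}) and (\ref{Kummer_sequence2}) with: the surjectivity of $\psi_\infty$ (proposition \ref{surjectivity_prop}); the vanishing $H^1(K_{\infty,w},E)[p^\infty]=0$ for $w\mid p$ (proposition \ref{cohomology_group_vanishing_prop}), which makes the genuine local condition at $p$ vacuous and gives $H^1(K_{\infty,w},E[p^\infty])=H^1(K_{\infty,w},E)[p^\infty]$ for $w\nmid p$; the vanishings $H^2(G_S(K_\infty),E[p^\infty])=0$ and $H^2(K_{\infty,w},E[p^\infty])=0$ (the latter established in the proof of theorem \ref{fine_Selmer_theorem}); and a fine-Selmer analogue of proposition \ref{surjectivity_prop} --- run the Cassels-Poitou-Tate sequence for $E[p^\infty]$ over $K_\infty$, and use that $\ilim_{n,m}\Sha^1(G_S(K_n),E[p^m])=\Sha^2(G_S(K_\infty),E[p^\infty])^*=0$ by the proof of theorem \ref{fine_Selmer_theorem} together with condition (ii) --- one feeds these into the global-to-local diagram and applies the snake lemma. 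As in lemma \ref{mu_lemma} this should yield, for each $m$, a short exact sequence over $K_\infty$
\[
0\ \to\ \mathcal{A}_m\ \to\ \Selinf(E/K_\infty)[p^m]\ \to\ \mathcal{B}_m\ \to\ 0,
\]
where $\mathcal{A}_m$ is built from the torsion points $E(K_\infty)[p^\infty]/p^m$ and is finite of bounded order as $m$ varies (condition (i)), and $\mathcal{B}_m$ is a localization kernel of $E[p^m]$-cohomology whose $\Gamma_n$-coinvariants recover $K_{n,m}$ up to error terms governed by the $\Gamma_n$-(co)homology of $\bigoplus_{w\in S_\infty}H^1(K_{\infty,w},E[p^m])$. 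Taking $\Gamma_n$-coinvariants of this sequence and passing to the inverse limit over $(n,m)$ --- all groups involved being finite of controlled size, thanks to finite generation over $\Lambda$ of $\Selinf(E/K_\infty)^*$ (\cite{Manin} theorem 4.5), the cofinite generation of the local cohomology modules (\cite{Gb_IPR}), and $cd_p(\Gamma)=1$, so that exactness survives the limit --- and applying lemma \ref{inverse_limit_lemma}(ii) identifies the main term; what remains is to show that the error inverse limits are finite.

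The main obstacle is exactly the control of those error terms at the primes above $p$. For $w\nmid p$ the groups $H^1(K_{\infty,w},E[p^m])$ are finite (their $p^\infty$-analogues being cofinitely generated $\Zp$-modules, the split-completely primes being handled via Shapiro's lemma as in proposition \ref{surjectivity_prop}), so they contribute finite inverse limits. For $w\mid p$, however, $H^1(K_{\infty,w},E[p^\infty])$ is not $\Lambda$-cotorsion (\cite{Gb_IPR} prop. 1), so a careless estimate would leave an inverse limit of positive $\Lambda$-rank and wreck the pseudo-isomorphism. Here the supersingular hypothesis is essential: via proposition \ref{cohomology_group_vanishing_prop} one identifies $H^1(K_{\infty,w},E[p^\infty])$ for $w\mid p$ with the divisible group $E(K_{\infty,w})\otimes\Qp/\Zp$, which is $\Lambda$-cofree, so the relevant torsion invariant of its Pontryagin dual vanishes; and --- the delicate point --- the large local-at-$p$ contribution coming from the snake lemma in the construction above should match the one coming from the passage from the fine Selmer group to $\Selinf(E/K_\infty)$ via the exact sequence
\[
0\ \to\ R_{p^\infty}(E/K_\infty)\ \to\ \Selinf(E/K_\infty)\ \to\ \bigoplus_{w\mid p}H^1(K_{\infty,w},E[p^\infty])\ \to\ 0,
\]
so that the two cancel, leaving only finite error. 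Making this cancellation visible --- rather than having these terms double-counted --- is the heart of the matter; the remaining finiteness and Mittag-Leffler verifications are routine, as in the proof of lemma \ref{mu_lemma}.
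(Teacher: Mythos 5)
Your opening reduction (apply Lemma \ref{inverse_limit_lemma}(ii) to $\Selinf(E/K_{\infty})^*$ and then compare $\ilim_{n,m}K_{n,m}$ with $\ilim_{n,m}\Selinf(E/K_{\infty})[p^m]_{\Gamma_n}$) and your diagnosis of where the danger lies (the groups $H^1(K_{\infty,w},E[p^{\infty}])$ for $w\mid p$ are not $\Lambda$-cotorsion) are both correct, and the ingredients you list are the right ones. But the proof has a genuine gap at exactly the point you flag yourself: you never exhibit the diagram in which the ``cancellation'' of the large local-at-$p$ terms takes place, and you even describe making it visible as ``the heart of the matter.'' As written, the argument routes the comparison through the sequence $0\to R_{p^{\infty}}(E/K_{\infty})\to\Selinf(E/K_{\infty})\to\bigoplus_{w\mid p}H^1(K_{\infty,w},E[p^{\infty}])$, whose right-exactness is itself unproven, and then asks two $\Lambda$-corank-positive terms to cancel; nothing in the proposal forces that cancellation, so the pseudo-isomorphism is not established.

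The paper's proof avoids any such cancellation by choosing the comparison object so that the primes above $p$ only ever enter through the local Kummer images $E(K_{\infty,w})/p^m$, never through the full $H^1(K_{\infty,w},E[p^m])$. Concretely, one maps the global Kummer sequence for $E[p^m]\subset E[p^{\infty}]$ to the local one for $E[p^m]\subset E$, takes $\Gamma_n$-coinvariants, and applies the snake lemma to get
$$0\to\ker\phi_{n,m}\to K_{n,m}\to\ker\psi'_{n,m}\to\coker\phi_{n,m},$$
where $\phi_{n,m}$ goes from (the image of) $(E(K_{\infty})[p^{\infty}]/p^m)_{\Gamma_n}$ to (the image of) $\bigl(\bigoplus_{w}E(K_{\infty,w})/p^m\bigr)_{\Gamma_n}$ and $\psi'_{n,m}$ lands in $\bigl(\bigoplus_w H^1(K_{\infty,w},E)[p^m]\bigr)_{\Gamma_n}$. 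The only local-at-$p$ input needed is that $\ilim_{n,m}(E(K_{\infty,w})/p^m)_{\Gamma_n}$ is finite, which follows from Wingberg's local theorem that $(E(K_{\infty,w})\otimes\Qp/\Zp)^*$ is pseudo-isomorphic to a free module, together with Lemma \ref{inverse_limit_lemma} --- this is the precise form of the fact you gesture at, but it is used to kill an error term, not to cancel an infinite one. A second, separate argument (using the surjectivity of $\psi_{\infty}$ from Proposition \ref{surjectivity_prop} and the $\Lambda$-cotorsionness of $\bigoplus_w H^1(K_{\infty,w},E)[p^{\infty}]$, which forces $\ilim_{n,m}(\img\psi'_m)^{\Gamma_n}=0$) then identifies $\ilim_{n,m}\ker\psi'_{n,m}$ with $\dot{T}_{\lambda}(\Selinf(E/K_{\infty})^*)$ up to finite error. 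To repair your proposal you would need to replace the fine-Selmer detour by this (or an equivalent) explicit snake-lemma comparison; without it the central claim remains an unproved assertion.
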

\begin{proof}
The exact Kummer sequences
\begin{equation}\label{Kummer_sequence3}
0 \rightarrow E[p^m] \rightarrow E \xrightarrow{p^m} E \rightarrow0
\end{equation}
and
\begin{equation}\label{Kummer_sequence4}
0 \rightarrow E[p^m] \rightarrow E[p^{\infty}] \xrightarrow{p^m} E[p^{\infty}] \rightarrow0
\end{equation}
yield a commutative diagram

\begin{equation}
\xymatrix{
0 \ar[r] & \underset{w \in S_{\infty}}{\bigoplus} E(K_{\infty,w})/p^m \ar[r] & \underset{w \in S_{\infty}}{\bigoplus} H^1(K_{\infty,w}, E[p^m]) \ar[r] & \underset{w \in S_{\infty}}{\bigoplus} H^1(K_{\infty,w}, E)[p^m] \ar[r] & 0\\
0 \ar[r] & E(K_{\infty})[p^{\infty}]/p^m \ar[u]_{\phi_m} \ar[r] & H^1(G_S(K_{\infty}), E[p^m]) \ar[u]_{\psi_m} \ar[r]
& H^1(G_S(K_{\infty}), E[p^{\infty}])[p^m] \ar[u]_{\psi'_m} \ar[r] & 0
}
\end{equation}

Taking $\Gamma_n$-coinvariants we get

\begin{equation}\label{lambda_diagram}
\xymatrix{
0 \ar[r] & B_{n,m} \ar[r] & \big(\underset{w \in S_{\infty}}{\bigoplus} H^1(K_{\infty,w}, E[p^m])\big)_{\Gamma_n} \ar[r] & \big(\underset{w \in S_{\infty}}{\bigoplus} H^1(K_{\infty,w}, E)[p^m]\big)_{\Gamma_n} \ar[r] & 0\\
0 \ar[r] & A_{n,m} \ar[u]_{\phi_{n,m}} \ar[r] & H^1(G_S(K_{\infty}), E[p^m])_{\Gamma_n} \ar[u]_{\psi_{n,m}} \ar[r]
& H^1(G_S(K_{\infty}), E[p^{\infty}])[p^m]_{\Gamma_n} \ar[u]_{\psi'_{n,m}} \ar[r] & 0
}
\end{equation}
where $A_{n,m}$ is the image of the map $$(E(K_{\infty})[p^{\infty}]/p^m)_{\Gamma_n} \to H^1(G_S(K_{\infty}), E[p^m])_{\Gamma_n}$$ and $B_{n,m}$ is the image of the map $$\big(\underset{w \in S_{\infty}}{\bigoplus} E(K_{\infty,w})/p^m\big)_{\Gamma_n} \to \big(\underset{w \in S_{\infty}}{\bigoplus} H^1(K_{\infty,w}, E[p^m])\big)_{\Gamma_n}$$

Applying the snake lemma to the diagram (\ref{lambda_diagram}) we get an exact sequence
\begin{equation}\label{lambda_sequence1}
0 \to \ker \phi_{n,m} \to K_{n,m} \to \ker \psi'_{n,m} \to \coker \phi_{n,m}
\end{equation}
We claim that each of the terms in this exact sequence is finite. To simplify arguments, we prove this fact subject to the condition that $n \geq N$ where $N \geq 0$ is an integer such that $K_{\infty}/K_N$ is totally ramified at all primes of $K_N$ above $p$ and such that every prime of $K_N$ above a prime in $S$ that does not split completely in $K_{\infty}/K_N$ is inert in this extension. Clearly it will suffice to prove that the first, third and fourth terms are finite. In what follows all inverse limits with indices involving $n$ will be taken over $n \geq N$

Since $E(K_{\infty})[p^{\infty}]/p^m$ is finite, therefore $\ker \phi_{n,m}$ is finite. Moreover, the order of $E(K_{\infty})[p^{\infty}]/p^m$ is bounded as $m$ varies. Hence the order of $(E(K_{\infty})[p^{\infty}]/p^m)_{\Gamma_n}$ is bounded as $n$ and $m$ vary. This in turn shows that the order of $\ker \phi_{n,m}$ is bounded as $n$ and $m$ vary. It follows that $\ilim_{n,m} \ker \phi_{n,m}$ is finite. This last fact will be needed later.

Now we show that $\coker \phi_{n,m}$ is finite by showing that $D(n,m):=\big(\underset{w \in S_{\infty}}{\bigoplus} E(K_{\infty,w})/p^m\big)_{\Gamma_n}$ is finite. We will also show that $\ilim_{n,m} \coker \phi_{n,m}$ is finite as we will need this later. We write $S_{\infty} = S_p \, \dotcup \, S_{\text{split}} \, \dotcup \, S_{\text{nsplit}}$ where $S_p$ is the set of primes of $S_{\infty}$ above $p$ and $S_{\text{split}}$ is the set of primes in $S_{\infty}$ above those in $S$ that split completely in $K_{\infty}/K$. First we show that $D_p(n,m):=\big(\underset{w \in S_p}{\bigoplus} E(K_{\infty,w})/p^m\big)_{\Gamma_n}$ is finite. Let $w \in S_p$. Note that by the condition on $n$, $\Gamma_n$ acts on $E(K_{\infty,w})/p^m$. We let $\text{Tor}(E(K_{\infty,w})$ be the $\Z$-torsion subgroup of $E(K_{\infty,w})$ and $E(K_{\infty,w})_{\text{Tor}}:=E(K_{\infty,w})/\text{Tor}(E(K_{\infty,w}))$. We have an exact sequence
\begin{equation}\label{lambda_sequence2}
((\text{Tor}(E(K_{\infty,w})))/p^m)_{\Gamma_n} \to (E(K_{\infty,w})/p^m)_{\Gamma_n} \to (E(K_{\infty,w})_{\text{Tor}}/p^m)_{\Gamma_n} \to 0
\end{equation}
Note that the Pontryagin dual of $E(K_{\infty,w})[p^{\infty}]$ is a finitely generated $\Zp[[\Gamma_N]]$-module. Since $\text{Tor}(E(K_{\infty,w}))/p^m=E(K_{\infty,w})[p^{\infty}]/p^m$, therefore by considering Pontryagin duals it follows that $((\text{Tor}(E(K_{\infty,w})))/p^m)_{\Gamma_n}$ is finite. Also from lemma \ref{inverse_limit_lemma} it follows that $\ilim_{n,m} ((\text{Tor}(E(K_{\infty,w})))/p^m)_{\Gamma_n}$ is finite.

Now we turn to the group $(E(K_{\infty,w})_{\text{Tor}}/p^m)_{\Gamma_n}$. Since $E(K_{\infty,w})_{\text{Tor}}$ is torsion-free, therefore it follows that $(E(K_{\infty,w})_{\text{Tor}}\otimes \Qp/\Zp)[p^m]=E(K_{\infty,w})_{\text{Tor}}/p^m$. Also note that $E(K_{\infty,w})_{\text{Tor}}\otimes \Qp/\Zp = E(K_{\infty,w})\otimes \Qp/\Zp$. These 2 facts show that
\begin{equation}\label{lambda_equality}
((E(K_{\infty,w})\otimes \Qp/\Zp)[p^m])_{\Gamma_n}=(E(K_{\infty,w})_{\text{Tor}}/p^m)_{\Gamma_n}
\end{equation}
Wingberg (\cite{Wingberg2} theorem 2.2) has shown that $(E(K_{\infty,w})\otimes \Qp/\Zp)^*$ is pseudo-isomorphic to a finitely generated free $\Zp[[\Gamma_N]]$-module. This together with the equality (\ref{lambda_equality}) show that $(E(K_{\infty,w})_{\text{Tor}}/p^m)_{\Gamma_n}$ is finite. Using lemma \ref{inverse_limit_lemma} we also see that $\ilim_{n,m}(E(K_{\infty,w})_{\text{Tor}}/p^m)_{\Gamma_n}$ is finite. From the facts above and the exact sequence (\ref{lambda_sequence2}) it follows that $(E(K_{\infty,w})/p^n)_{\Gamma_n}$ is finite and that $\ilim_{n,m} (E(K_{\infty,w})/p^m)_{\Gamma_n}$ is finite. Thus we have shown that for any $n \geq N, m \geq 1$ that $D_p(n,m)$ is finite and $\ilim_{n,m} D_p(n,m)$ is also finite.

Now we turn to the group $D_{\text{nsplit}}(n,m):=\big(\underset{w \in S_{\text{nsplit}}}{\bigoplus} E(K_{\infty,w})/p^m\big)_{\Gamma_n}$. Let $w \in S_{\text{nsplit}}$. We claim that $E(K_{\infty,w})\otimes \Qp/\Zp=0$. To see this, it will suffice to show for any $t \geq 0$ that $E(K_{t,w}) \otimes \Qp/\Zp=0$. By Mattuck's theorem $E(K_{t,w})=\Z_l^{[K_{t,w}:\Q_l]}\times T$ where $l \neq p$ is the characteristic of the residue field of $K_{t,w}$ and $T$ is a finite group. It follows from this that $E(K_{t,w}) \otimes \Qp/\Zp =0$ and hence $E(K_{\infty, w}) \otimes \Qp/\Zp = 0$ as claimed. Then just as in the case of $D_p(n,m)$, we also have $D_{\text{nsplit}}(n,m)$ is finite and $\ilim_{n,m} D_{\text{nsplit}}(n,m)$ is also finite.

Finally we turn to the group $D_{\text{split}}(n,m):=\big(\underset{w \in S_{\text{split}}}{\bigoplus} E(K_{\infty,w})/p^m\big)_{\Gamma_n}$. Let $v \in S$ be a prime that splits completely in $K_{\infty}/K$ and define $C_v:=\underset{w | v}{\bigoplus} E(K_{\infty,w})/p^m$ where the sum runs over all primes of $K_{\infty}$ lying over $v$. We claim that $H^1(\Gamma_n, C_v)=(C_v)_{\Gamma_n}=0$. To simplify matters, we prove this for $n=0$ (for arbitrary $n$ the proof is similar). The group $C_v$ is a direct limit of induced $\Gamma$-modules and hence by Shapiro's lemma it follows that $H^1(\Gamma, C_v)=H^1(\{1\}, E(K_v)/p^m)=0$. Thus we see that $D_{\text{split}}(n,m)=0$.

All in all, we see from the above for any $n \geq N, m \geq 1$ that $D(n,m)$ is finite and that $\ilim_{n,m} D(n,m)$ is also finite. It follows that $\coker \phi_{n,m}$ is finite and that $\ilim_{n,m} \coker \phi_{n,m}$ is also finite.

Finally we prove that $\ker \psi'_{n,m}$ is finite. Since $\ker \psi'_{n,m} \subseteq H^1(G_S(K_{\infty}), E[p^{\infty}])[p^m]_{\Gamma_n}$ we easily see that by taking Pontryagin duals that it suffices to prove that $H^1(G_S(K_{\infty}), E[p^{\infty}])^*$ is a finitely generated $\Lambda$-module. To show this last fact we have to show that $H^1(G_S(K_{\infty}), E[p^{\infty}])^{\Gamma}$ is cofinitely generated over $\Zp$. Since $cd_p(\Gamma)=1$, therefore we have a surjection $H^1(G_S(K), E[p^{\infty}]) \twoheadrightarrow H^1(G_S(K_{\infty}), E[p^{\infty}])^{\Gamma}$ so it suffices to prove that $H^1(G_S(K), E[p^{\infty}])$ is cofinitely generated over $\Zp$ i.e. we must show that $H^1(G_S(K), E[p^{\infty}])[p]$ is finite. But $H^1(G_S(K), E[p])$ is finite (see \cite{NSW} theorem 8.3.20) and this group surjects onto $H^1(G_S(K), E[p^{\infty}])[p]$ so it indeed follows that $H^1(G_S(K), E[p^{\infty}])$ is cofinitely generated over $\Zp$. This proves that $\ker \psi'_{n,m}$ is finite.

We have now shown that each of the terms in the exact sequence (\ref{lambda_sequence1}) are finite and so by taking inverse limits the sequence remains exact

\begin{equation}\label{lambda_sequence3}
0 \to \ilim_{n,m} \ker \phi_{n,m} \to \ilim_{n,m} K_{n,m} \to \ilim_{n,m} \ker \psi'_{n,m} \to \ilim_{n,m} \coker \phi_{n,m}
\end{equation}

We have shown above that $\ilim_{n,m} \ker \phi_{n,m}$ and $\ilim_{n,m} \coker \phi_{n,m}$ are both finite so from the exact sequence (\ref{lambda_sequence3}) we get a pseudo-isomorphism $\ilim_{n,m} K_{n,m} \sim \ilim_{n,m} \ker \psi'_{n,m}$. Therefore to prove the lemma we only have to show that $\ilim_{n,m} \ker \psi'_{n,m}$  is pseudo-isomorphic to $\dot{T}_{\lambda}(\Selinf(E/K_{\infty})^*)$. To show this, first consider the exact sequence

\begin{equation}
0 \rightarrow \Selinf(E/K_{\infty}) \rightarrow H^1(G_S(K_{\infty}), E[p^{\infty}]) \xrightarrow{\psi'} \underset{w \in S_{\infty}}{\bigoplus} H^1(K_{\infty,w}, E) \rightarrow 0
\end{equation}
The surjectivity of $\psi'$ is due to proposition \ref{surjectivity_prop}. This exact sequence induces another sequence

\begin{align}
\begin{aligned}
0 &\rightarrow \Selinf(E/K_{\infty})[p^m] \rightarrow H^1(G_S(K_{\infty}), E[p^{\infty}])[p^m] \xrightarrow{\psi'_m} \underset{w \in S_{\infty}}{\bigoplus} H^1(K_{\infty,w}, E)[p^m]\\
 &\xrightarrow{\psi''_m} \Selinf(E/K_{\infty})/p^m
\end{aligned}
\end{align}
We break this sequence into 2 exact sequences

\begin{equation}
0 \rightarrow \Selinf(E/K_{\infty})[p^m] \rightarrow H^1(G_S(K_{\infty}), E[p^{\infty}])[p^m] \xrightarrow{\phi_m} \img \psi'_m \rightarrow 0
\end{equation}

\begin{equation}
0 \rightarrow \img \psi'_m \xrightarrow{\theta_m} \underset{w \in S_{\infty}}{\bigoplus} H^1(K_{\infty,w}, E)[p^m] \rightarrow \img \psi''_m \rightarrow 0
\end{equation}
Taking $\Gamma_n$-coinvaraints of both these sequences we get
\begin{align}\label{lambda_sequence4}
\begin{aligned}
(\img \psi'_m)^{\Gamma_n} &\rightarrow \Selinf(E/K_{\infty})[p^m]_{\Gamma_n} \rightarrow H^1(G_S(K_{\infty}), E[p^{\infty}])[p^m]_{\Gamma_n}\\ &\xrightarrow{\phi_{n,m}} (\img \psi'_m)_{\Gamma_n} \rightarrow 0
\end{aligned}
\end{align}

\begin{equation}\label{lambda_sequence5}
(\img \psi''_m)^{\Gamma_n} \rightarrow (\img \psi'_m)_{\Gamma_n} \xrightarrow{\theta_{n,m}} \big(\underset{w \in S_{\infty}}{\bigoplus} H^1(K_{\infty,w}, E)[p^m]\big)_{\Gamma_n}
\end{equation}
We claim the each of the terms in the sequences (\ref{lambda_sequence4}) and (\ref{lambda_sequence5}) is finite. First we deal with (\ref{lambda_sequence4}): Let  $J:=\underset{w \in S_{\infty}}{\bigoplus} H^1(K_{\infty,w}, E)[p^{\infty}]$. Then $(\img \psi'_m)^{\Gamma_n} \subseteq J[p^m]^{\Gamma_n}$. In the proof of proposition \ref{surjectivity_prop} we proved that $J$ is a cofinitely generated $\Lambda$-module. It follows from this that $J[p^m]^{\Gamma_n}$ is finite and hence $(\img \psi'_m)^{\Gamma_n}$ is also finite. We showed above that $H^1(G_S(K_{\infty}), E[p^{\infty}])$ is a cofinitely generated $\Lambda$-module. It follows that $H^1(G_S(K_{\infty}), E[p^{\infty}])[p^m]_{\Gamma_n}$ is finite. This proves that all the terms in the sequence (\ref{lambda_sequence4}) are finite.

Now we deal with the sequence (\ref{lambda_sequence5}). We know that $\Selinf(E/K_{\infty})^*$ is a finitely generated $\Lambda$-module (see \cite{Manin} theorem 4.5) so it follows that $(\Selinf(E/K_{\infty})/p^m)^{\Gamma_n}$ is finite. Since $(\img \psi''_m)^{\Gamma_n} \subseteq (\Selinf(E/K_{\infty})/p^m)^{\Gamma_n}$, it follows that $(\img \psi''_m)^{\Gamma_n}$ is also finite. Also since $J$ is a cofinitely generated $\Lambda$-module, it follows that $J[p^m]_{\Gamma_n}=\big(\underset{w \in S_{\infty}}{\bigoplus} H^1(K_{\infty,w}, E)[p^m]\big)_{\Gamma_n}$ is finite. Therefore all the terms in the sequence (\ref{lambda_sequence5}) are finite.

As all the terms in the sequence (\ref{lambda_sequence4}) and (\ref{lambda_sequence5}) are finite, therefore by taking inverse limits the sequences remain exact

\begin{align}\label{lambda_sequence6}
\begin{aligned}
\ilim_{n,m} (\img \psi'_m)^{\Gamma_n} &\rightarrow \ilim_{n,m} \Selinf(E/K_{\infty})[p^m]_{\Gamma_n} \rightarrow \ilim_{n,m} H^1(G_S(K_{\infty}), E[p^{\infty}])[p^m]_{\Gamma_n}\\ &\xrightarrow{\Phi} \ilim_{n,m}(\img \psi'_m)_{\Gamma_n} \rightarrow 0
\end{aligned}
\end{align}

\begin{equation}\label{lambda_sequence7}
\ilim_{n,m} (\img \psi''_m)^{\Gamma_n} \rightarrow \ilim_{n,m} (\img \psi'_m)_{\Gamma_n} \xrightarrow{\Theta} \ilim_{n,m} \big(\underset{w \in S_{\infty}}{\bigoplus} H^1(K_{\infty,w}, E)[p^m]\big)_{\Gamma_n}
\end{equation}
We have an exact sequence
\begin{equation}\label{lambda_sequence8}
0 \rightarrow \ker(\Phi) \rightarrow \ker(\Theta \circ \Phi) \xrightarrow{\Phi} \ker(\Theta)
\end{equation}
Note that $\ker(\Theta \circ \Phi)=\ilim_{n,m} \ker \psi'_{n,m}$. Therefore from the exact sequence, we see that to prove the lemma, we only have to show that $\ker \Phi$ is pseudo-isomorphic to $\dot{T}_{\lambda}(\Selinf(E/K_{\infty})^*)$ and that $\Phi(\ker(\Theta \circ \Phi))$ is finite.

First we deal with $\ker \Phi$. From lemma \ref{inverse_limit_lemma} $\ilim_{n,m} \Selinf(E/K_{\infty})[p^m]_{\Gamma_n}$ is pseudo-isomorphic to $\dot{T}_{\lambda}(\Selinf(E/K_{\infty})^*)$. Therefore from the exact sequence (\ref{lambda_sequence6}), to show the existence of a pseudo-isomorphism $\ker \Phi \sim \dot{T}_{\lambda}(\Selinf(E/K_{\infty})^*)$, we see that it will suffice to show that $\ilim_{n,m} (\img \psi'_m)^{\Gamma_n}=0$. Let  $J:=\underset{w \in S_{\infty}}{\bigoplus} H^1(K_{\infty,w}, E)[p^{\infty}]$. Then $\ilim_{n,m} (\img \psi'_m)^{\Gamma_n} \subseteq \ilim_{n,m} J[p^m]^{\Gamma_n}$. According to \cite{NSW} prop. 5.5.10 $\ilim_{n,m} J[p^m]^{\Gamma_n}$ is a free $\Lambda$-module with the same rank as $J^*$. In the proof of proposition \ref{surjectivity_prop} we showed that $J^*$ is a torsion $\Lambda$-module. Therefore it follows that $\ilim_{n,m} J[p^m]^{\Gamma_n}=0$ and hence also $\ilim_{n,m} (\img \psi'_m)^{\Gamma_n}=0$. This proves that $\ker \Phi$ is pseudo-isomorphic to $\dot{T}_{\lambda}(\Selinf(E/K_{\infty})^*)$.

Now we show that $\Phi(\ker(\Theta \circ \Phi))$ is finite. First we show that this group is finitely generated over $\Zp$. Recall that we showed above that $\ker(\Theta \circ \Phi)=\ilim_{n,m} \ker \psi'_{n,m}$ is pseudo-isomorphic to $\ilim_{n,m} K_{n,m}$. We have $\ilim_{n,m} K_{n,m} \subseteq \ilim_{n,m} H^1(G_S(K_{\infty}), E[p^m])_{\Gamma_n}$. Therefore to show that $\Phi(\ker(\Theta \circ \Phi))$ is finitely generated over $\Zp$, it will suffice to show that $\ilim_{n,m} H^1(G_S(K_{\infty}), E[p^m])_{\Gamma_n}$ is finitely generated over $\Zp$. Consider the bottom row of the commutative diagram (\ref{lambda_diagram})

$$(E(K_{\infty})[p^{\infty}]/p^m)_{\Gamma_n} \to H^1(G_S(K_{\infty}), E[p^m])_{\Gamma_n} \to H^1(G_S(K_{\infty}), E[p^{\infty}])[p^m]_{\Gamma_n} \to 0$$\\
We showed above that all the terms in the above exact sequence are finite. Therefore by taking inverse limits the sequence remains exact

$$\ilim_{n,m} (E(K_{\infty})[p^{\infty}]/p^m)_{\Gamma_n} \to \ilim_{n,m} H^1(G_S(K_{\infty}), E[p^m])_{\Gamma_n} \to \ilim_{n,m} H^1(G_S(K_{\infty}), E[p^{\infty}])[p^m]_{\Gamma_n} \to 0$$

Applying lemma \ref{inverse_limit_lemma} to the first and last terms of the above sequence, it follows that $\ilim_{n,m} H^1(G_S(K_{\infty}), E[p^m])_{\Gamma_n}$ is in fact finitely generated over $\Zp$ as desired.

Now we show that $\Phi(\ker(\Theta \circ \Phi))$ is a $\Zp$-torsion module. From the exact sequence (\ref{lambda_sequence8}), this will follow if we can prove that $\ker \Theta$ is a torsion $\Zp$-module. From the exact sequence (\ref{lambda_sequence7}), this will follow once we show that $\ilim_{n,m} (\img \psi''_m)^{\Gamma_n}$ has this property. But $\ilim_{n,m} (\img \psi''_m)^{\Gamma_n} \subseteq (\Selinf(E/K_{\infty})/p^m)^{\Gamma_n}$ and so the desired result follows from lemma \ref{inverse_limit_lemma}.

Thus we have shown that $\Phi(\ker(\Theta \circ \Phi))$ is a finitely generated torsion $\Zp$-module. It follows that this group is finite. This completes the proof of the lemma.
\end{proof}

Assume that we have a first quadrant spectral sequence $E_r^{st} \Rightarrow E^{s+t}$ such that $E_2^{s,t}=0$ for all $s > 1$ and all $t$, then we have an exact sequence (see \cite{NSW} lemma 2.1.3 and \cite{CE} prop. XV-5.7)
$$0 \to E_2^{1,1} \to E^2 \to E_2^{0,2} \to 0$$\\
Now let $n,m \geq 0$. Since $cd_p(\Gamma_n)=1$, therefore we can apply the above result to the Hochschild-Serre spectral sequence $H^s(\Gamma_n, H^t(G_S(K_{\infty}), E[p^m])) \Rightarrow H^{s+t}(G_S(K_n), E[p^m])$ (where $K_n$ is the fixed field of $\Gamma_n$) to get an exact sequence
\begin{equation}\label{HS_sequence}
0 \rightarrow H^1(G_S(K_{\infty}), E[p^m])_{\Gamma_n} \xrightarrow{f_{n,m}} H^2(G_S(K_n), E[p^m]) \xrightarrow{g_{n,m}} H^2(G_S(K_{\infty}), E[p^m])^{\Gamma_n} \rightarrow 0
\end{equation}

\begin{lemma}\label{SS_commutative_diagram_lemma}
For $n' > n$ we have a commutative diagram

\begin{equation*}
\xymatrix{
H^1(G_S(K_{\infty}), E[p^m])_{\Gamma_{n'}} \ar[d]_{\can} \ar[r]^{f_{n',m}} & H^2(G_S(K_{n'}), E[p^m]) \ar[d]_{\cor} \ar[r]^{g_{n',m}} & H^2(G_S(K_{\infty}), E[p^m])^{\Gamma_{n'}} \ar[d]_{\norm}\\
H^1(G_S(K_{\infty}), E[p^m])_{\Gamma_n} \ar[r]^{f_{n,m}} & H^2(G_S(K_n), E[p^m]) \ar[r]^{g_{n,m}} & H^2(G_S(K_{\infty}), E[p^m])^{\Gamma_n}
}
\end{equation*}
where the maps $can$, $cor$ and $norm$ are the canonical projection, corestriction and norm, respectively.
Also for $m' > m$ we have a commutative diagram

\begin{equation*}
\xymatrix{
H^1(G_S(K_{\infty}), E[p^{m'}])_{\Gamma_n} \ar[d]_{p^{m'-m}} \ar[r]^{f_{n,m'}} & H^2(G_S(K_n), E[p^{m'}]) \ar[d]_{p^{m'-m}} \ar[r]^{g_{n,m'}} & H^2(G_S(K_{\infty}), E[p^{m'}])^{\Gamma_n} \ar[d]_{p^{m'-m}}\\
H^1(G_S(K_{\infty}), E[p^m])_{\Gamma_n} \ar[r]^{f_{n,m}} & H^2(G_S(K_n), E[p^m]) \ar[r]^{g_{n,m}} & H^2(G_S(K_{\infty}), E[p^m])^{\Gamma_n}
}
\end{equation*}
where the vertical maps are induced by multiplication by $p^{m'-m}$

\end{lemma}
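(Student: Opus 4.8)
The plan is to exhibit both diagrams as instances of the functoriality of the exact sequence (\ref{HS_sequence}), which was extracted from the Hochschild--Serre spectral sequence $H^s(\Gamma_n,H^t(G_S(K_{\infty}),E[p^m]))\Rightarrow H^{s+t}(G_S(K_n),E[p^m])$. To lighten notation write $N=G_S(K_{\infty})$, $G=G_S(K_n)$, $G'=G_S(K_{n'})$; thus $N$ is a normal subgroup of both $G$ and $G'$, it is \emph{contained} in $G'\subseteq G$, one has $G/N=\Gamma_n$, $G'/N=\Gamma_{n'}$, and $\Gamma_{n'}$ is an open subgroup of $\Gamma_n$ of index $p^{n'-n}$. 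Since $cd_p(\Gamma_n)=1$ and $E[p^m]$ is $p$-torsion we have $E_2^{s,t}=0$ for $s\ge 2$, so there are no differentials into or out of $E_2^{1,1}$ or $E_2^{0,2}$; hence $f_{n,m}$ is the inclusion of $E_2^{1,1}=E_\infty^{1,1}=H^1(G_S(K_{\infty}),E[p^m])_{\Gamma_n}$ onto $\ker(\res^G_N)\subseteq H^2(G,E[p^m])$, and $g_{n,m}$ is the restriction map $\res^G_N$, which is surjective onto $E_2^{0,2}=H^2(G_S(K_{\infty}),E[p^m])^{\Gamma_n}$. The analogous descriptions hold with $G'$ and $\Gamma_{n'}$ in place of $G$ and $\Gamma_n$.

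For the second diagram I would argue purely from naturality in the coefficient module. The map $p^{m'-m}\colon E[p^{m'}]\to E[p^m]$ is a homomorphism of $G$-modules, hence of $N$- and $\Gamma_n$-modules, so it induces a morphism between the two Hochschild--Serre spectral sequences attached to $1\to N\to G\to\Gamma_n\to 1$ with coefficients $E[p^{m'}]$ and $E[p^m]$, and therefore a morphism between the associated three-term exact sequences (\ref{HS_sequence}). On the abutment this morphism is $H^2$ applied to $p^{m'-m}$, and on $E_2^{1,1}$, resp.\ $E_2^{0,2}$, it is the map induced by $p^{m'-m}$ on $H^1(N,-)_{\Gamma_n}$, resp.\ on $H^2(N,-)^{\Gamma_n}$; these are exactly the vertical arrows of the second diagram, so both of its squares commute.

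The first diagram is the substantive one. For the right-hand square I would use the Mackey double coset formula for $\res^G_N\circ\cor^G_{G'}$: because $N$ is normal in $G$ and is contained in $G'$, every double coset in $N\backslash G/G'$ is represented by a single coset of $G/G'=\Gamma_n/\Gamma_{n'}$, and for such a representative $g$ one has $N\cap gG'g^{-1}=N$ and $g^{-1}Ng\cap G'=N$, so the formula collapses to $\res^G_N\circ\cor^G_{G'}=\big(\sum_{g\in\Gamma_n/\Gamma_{n'}}g_*\big)\circ\res^{G'}_N$, where $g_*$ denotes the conjugation action of $\Gamma_n$ on $H^2(N,E[p^m])$. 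Since $\res^{G'}_N$ takes values in the $\Gamma_{n'}$-invariants, the operator $\sum_{g\in\Gamma_n/\Gamma_{n'}}g_*$ restricted there is precisely the map labelled $\norm$; combined with $g_{n,m}=\res^G_N$ and $g_{n',m}=\res^{G'}_N$ (targets corestricted to the appropriate invariants), this gives commutativity of the right-hand square. For the left-hand square, composing with $g_{n,m}$ shows that $\cor^G_{G'}\circ f_{n',m}$ and $f_{n,m}\circ\can$ both land in $\ker(g_{n,m})=\ker(\res^G_N)=\img(f_{n,m})$; since $f_{n,m}$ is injective it remains to check that the two resulting self-maps of $H^1(G_S(K_{\infty}),E[p^m])_{\Gamma_n}$ coincide. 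I would do this either by realising $\cor^G_{G'}$ at the level of the double complexes of inhomogeneous cochains $C^\bullet(\Gamma_n,C^\bullet(N,E[p^m]))$ and $C^\bullet(\Gamma_{n'},C^\bullet(N,E[p^m]))$ — whose common inner complex is a complex of $\Gamma_n$-modules, so that $\cor^{\Gamma_n}_{\Gamma_{n'}}$ induces a filtered morphism of spectral sequences realising $\cor^G_{G'}$ on the abutment and $\cor^{\Gamma_n}_{\Gamma_{n'}}$ in each bidegree — or, equivalently, by a dimension-shifting argument expressing $f_{n,m}$ through connecting homomorphisms and reducing the claim to degree $0$. Either way, corestriction in degree $1$ for the procyclic pair $\Gamma_{n'}\subseteq\Gamma_n$ is identified, under $H^1(\Gamma_n,A)=A_{\Gamma_n}$, with the canonical surjection $A_{\Gamma_{n'}}\to A_{\Gamma_n}$ (a short computation via the finite cyclic quotients of $\Gamma_n$, using inflation--restriction to reduce to the finite cyclic case), and this is exactly $\can$.

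I expect the identification of the map induced on $E_2^{1,1}$ by corestriction with $\can$ to be the only real obstacle. Restriction along $G'\subseteq G$ is automatically a morphism of the Hochschild--Serre spectral sequences because it comes from a morphism of group extensions, whereas corestriction is a transfer and has to be produced either at the cochain level or imported as a known compatibility; everything else reduces to bookkeeping together with the elementary identification of $\cor$ for $\Gamma_{n'}\subseteq\Gamma_n$ in degrees $0$ and $1$ with $\norm$ and $\can$ respectively.
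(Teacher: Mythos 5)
Your proposal is correct, and it differs from the paper's proof in one genuine respect while coinciding with it where the real work lies. The paper handles both squares of the first diagram at once by realising corestriction as a morphism of the double complexes $D(G_S(K_{n'}),\Gamma_{n'},E[p^m])\to D(G_S(K_n),\Gamma_n,E[p^m])$ underlying the two Hochschild--Serre spectral sequences, then checking that the induced maps on $E_2^{1,1}$, on the abutment, and on $E_2^{0,2}$ are $\can$, $\cor$ and $\norm$; the second diagram is treated identically using multiplication by $p^{m'-m}$, so your naturality-in-coefficients argument for it is the same proof in different clothing. What you do differently is the right-hand square of the first diagram: since $G_S(K_\infty)$ is normal in $G_S(K_n)$ and contained in $G_S(K_{n'})$, the double coset formula for $\res\circ\cor$ collapses to the norm over $\Gamma_n/\Gamma_{n'}$ composed with restriction, which gives that square by a purely group-cohomological computation with no need to know that the transfer respects the Hochschild--Serre filtration. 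This is more economical and correctly isolates the left-hand square as the only point where corestriction must be produced at the filtered cochain level; for that square you fall back on exactly the paper's double-complex construction (or on dimension shifting, paralleling the paper's stated alternative via the explicit description of $f_{n,m}$), together with the identification of $\cor^{\Gamma_n}_{\Gamma_{n'}}$ in degree one with the canonical surjection on coinvariants, which the paper extracts from Weiss's formula. Both proofs leave that last compatibility as a routine verification at a comparable level of detail, so there is no gap.
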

\begin{proof}
From the formula for the corestriction map (\cite{Weiss} prop. 2.5.2) it is easy to show that the corestriction map $\cor: H^1(\Gamma_{n'}, H^1(G_S(K_{\infty}), E[p^m])) \to H^1(\Gamma_n, H^1(G_S(K_{\infty}), E[p^m]))$ corresponds to the canonical projection $\can: H^1(G_S(K_{\infty}), E[p^m])_{\Gamma_{n'}}\to H^1(G_S(K_{\infty}), E[p^m])_{\Gamma_n}$. Also we know that the corestriction map $\cor: H^2(G_S(K_{\infty}), E[p^m])^{\Gamma_{n'}} \to H^2(G_S(K_{\infty}), E[p^m])^{\Gamma_n}$ is equal to the norm map. Therefore we see that to show that the first diagram commutes, it suffices to show the commutativity of the following diagram

\begin{equation}\label{SS_diagram}
\xymatrix{
H^1(\Gamma_{n'}, H^1(G_S(K_{\infty}), E[p^m])) \ar[d]_{\cor} \ar[r]^-{f_{n',m}} & H^2(G_S(K_{n'}), E[p^m]) \ar[d]_{\cor} \ar[r]^{g_{n',m}} & H^2(G_S(K_{\infty}), E[p^m])^{\Gamma_{n'}} \ar[d]_{\cor}\\
H^1(\Gamma_n, H^1(G_S(K_{\infty}), E[p^m])) \ar[r]^-{f_{n,m}} & H^2(G_S(K_n), E[p^m]) \ar[r]^{g_{n,m}} & H^2(G_S(K_{\infty}), E[p^m])^{\Gamma_n}
}
\end{equation}
As in the proof of \cite{NSW} theorem 2.4.1, the data $(G_S(K_{n'}), \Gamma_{n'}, E[p^m])$ determines a double complex $D(G_S(K_{n'}), \Gamma_{n'}, E[p^m])$. Similarly, we have a double complex $D(G_S(K_n), \Gamma_n, E[p^m])$. By taking the column-wise filtrations of the total complexes of these double complexes we obtain 2 Hochschild-Serre spectral sequences $SS(G_S(K_{n'}), \Gamma_{n'}, E[p^m])$ and $SS(G_S(K_n), \Gamma_n, E[p^m])$ (see
\cite{McCleary} theorem 2.15 and \cite{NSW} theorem 2.4.1). The obvious corestriction map on cochains induces a morphism of double complex $\cor: D(G_S(K_{n'}), \Gamma_{n'}, E[p^m]) \to D(G_S(K_n, \Gamma_n, E[p^m])$ which, in turn, induces a morphism of spectral sequences $\cor: SS(G_S(K_{n'}), \Gamma_{n'}, E[p^m]) \to SS(G_S(K_n, \Gamma_n, E[p^m])$ and their corresponding limit terms.

One checks using the definitions of $f_{n,m}$ and $g_{n,m}$ (see \cite{CE} prop. XV-5.7) that these morphisms commute with the induced maps on the terms in the diagram (\ref{SS_diagram}) and that these induced maps are actually corestriction. This proves that the diagram (\ref{SS_diagram}) commutes and hence the first diagram in the statement of the lemma commutes.

To show that the second diagram in the statement of the lemma commutes one argues similar fashion using the map $p^{m'-m}: D(G_S(K_n), \Gamma_n, E[p^{m'}]) \to D(G_S(K_n), \Gamma_n, E[p^m])$ which is induced by multiplication by $p^{m'-m}$

Alternatively, to show that the 2 diagrams commute, one can use the explicit descriptions of $f_{n,m}$ and $g_{n,m}$: the map $g_{n,m}$ is the restriction map (see \cite{Maclane} prop. XI-10.2) whereas the map $f_{n,m}^{-1}: \img f_{n,m} \to H^1(G_S(K_{\infty}), E[p^m])_{\Gamma_n}$ is described in \cite{DHW} (see also \cite{Huebschmann} theorem 2).
\end{proof}
Now let $v$ be a prime of $K_n$, $w$ a prime of $K_{\infty}$ above $v$ and $\Gamma_{n,w}$ the decomposition group of $w$ in $K_{\infty}/K$. We have a Hochschild-Serre spectral sequence $H^s(\Gamma_{n,w}, H^t(K_{\infty,w}, E[p^m])) \Rightarrow H^{s+t}(K_{n,v}, E[p^m])$. We certainly have $cd_p(\Gamma_{n,w})=1$ so as in the the global case we get an exact sequence
\begin{equation}
0 \rightarrow H^1(K_{\infty,w}, E[p^m])_{\Gamma_{n,w}} \xrightarrow{f_{w,n,m}} H^2(K_{n,v}, E[p^m]) \xrightarrow{g_{w,n,m}} H^2(K_{\infty,w}, E[p^m])^{\Gamma_{n,w}} \rightarrow 0
\end{equation}
By Shapiro's lemma $H^1(K_{\infty,w}, E[p^m])_{\Gamma_{n,w}}=(\bigoplus_{w|v} H^1(K_{\infty,w}, E[p^m]))_{\Gamma_n}$ and $H^2(K_{\infty,w}, E[p^m])^{\Gamma_{n,w}}=(\bigoplus_{w|v} H^2(K_{\infty,w}, E[p^m]))^{\Gamma_n}$ where the direct sum runs over all primes of $w$ dividing $v$. Taking direct sums, we get a diagram

\begin{equation*}
\resizebox{\displaywidth+3cm}{!}{
\xymatrix{
0 \ar[r] & (\underset{w \in S_{\infty}}{\bigoplus} H^1(K_{\infty,w}, E[p^m]))_{\Gamma_n} \ar[r]^-{f'_{n,m}} & \underset{v \in S_n}{\bigoplus} H^2(K_{n,v}, E[p^m]) \ar[r]^-{g'_{n,m}} & (\underset{w \in S_{\infty}}{\bigoplus} H^2(K_{\infty,w}, E[p^m]))^{\Gamma_n} \ar[r] & 0\\
0 \ar[r] & H^1(G_S(K_{\infty}), E[p^m])_{\Gamma_n} \ar[u]^{\psi_{n,m}} \ar[r]^{f_{n,m}} & H^2(G_S(K_n), E[p^m]) \ar[u]^{\psi'_{n,m}} \ar[r]^{g_{n,m}} & H^2(G_S(K_{\infty}), E[p^m])^{\Gamma_n} \ar[u]^{\psi''_{n,m}} \ar[r] & 0\\
}}
\end{equation*}

The vertical maps in the diagram are induced by restriction. This diagram commutes. To see this, one argues as in the proof of lemma \ref{SS_commutative_diagram_lemma} taking the restriction map of the appropriate double complexes. Applying the snake lemma we get an exact sequence

\begin{equation}
0 \to \ker \psi_{n,m} \to \ker \psi'_{n,m} \to \ker \psi''_{n,m} \to \coker \psi_{n,m}
\end{equation}\\
Lemma \ref{SS_commutative_diagram_lemma} and its local analog allow us to take inverse limits of this exact sequence. By \cite{NSW} theorem 8.3.20 $H^2(G_S(K_n), E[p^m])$ is finite and therefore $\ker \psi'_{n,m}$ is also finite. Therefore by taking inverse limits the sequence remains exact

\begin{equation}\label{final_sequence}
0 \to \ilim_{n,m} \ker \psi_{n,m} \to \ilim_{n,m} \ker \psi'_{n,m} \to \ilim_{n,m} \ker \psi''_{n,m} \xrightarrow{\theta} \ilim_{n,m}\coker \psi_{n,m}
\end{equation}

By lemma \ref{lambda_lemma} $\ilim_{n,m} \ker \psi_{n,m} \sim \dot{T}_{\lambda}(\Selinf(E/K_{\infty})^*)$. Also note that $\ilim_{n,m} \ker \psi''_{n,m} = \ilim_{n,m} \Sha^2(G_S(K_{\infty}), E[p^m])^{\Gamma_n}$ so by lemma \ref{mu_lemma} we have $\ilim_{n,m} \ker \psi''_{n,m} \sim \dot{T}_{\mu}(\Selinf(E/K_{\infty})^*)$. We claim that $\img \theta$ is finite. Since $\ilim_{n,m} \ker \psi''_{n,m} \sim \dot{T}_{\mu}(\Selinf(E/K_{\infty})^*)$ therefore $\img \theta$ is a $\Zp$-torsion module. Define $J:=\underset {w \in S_{\infty}} \bigoplus H^1(K_{\infty,w}, E)[p^{\infty}]$. Then we have an exact sequence
$$0 \to \underset{w \in S_{\infty}}{\bigoplus} E(K_{\infty, w})/p^m \to \underset{w \in S_{\infty}}{\bigoplus} H^1(K_{\infty,w}, E[p^m]) \to J[p^m] \to 0$$
This, in turn, induces another exact sequence
\begin{equation}\label{coker_psi_sequence1}
(\underset{w \in S_{\infty}}{\bigoplus} E(K_{\infty, w})/p^m)_{\Gamma_n} \to (\underset{w \in S_{\infty}}{\bigoplus} H^1(K_{\infty,w}, E[p^m]))_{\Gamma_n} \to J[p^m]_{\Gamma_n} \to 0
\end{equation}

In the proof of proposition \ref{surjectivity_prop} we showed that $J^*$ is a finitely generated torsion $\Lambda$-module. Hence it follows that $J[p^m]_{\Gamma_n}$ is finite. Also in lemma \ref{lambda_lemma} we showed that $(\underset{w \in S_{\infty}}{\bigoplus} E(K_{\infty, w})/p^m)_{\Gamma_n}$ is finite. Therefore it follows from the exact sequence (\ref{coker_psi_sequence1}) that $(\underset{w \in S_{\infty}}{\bigoplus} H^1(K_{\infty,w}, E[p^m]))_{\Gamma_n}$ is finite and hence $\coker \psi_{n,m}$ is finite.

It follows that $\ilim_{n,m} \coker \psi_{n,m} = \coker \psi$ where $\psi:=\ilim_{n,m} \psi_{n,m}$

Since the groups in the exact sequence (\ref{coker_psi_sequence1}) are finite, therefore by taking inverse limits the sequence remains exact
\begin{equation}\label{coker_psi_sequence2}
\ilim_{n,m} (\underset{w \in S_{\infty}}{\bigoplus} E(K_{\infty, w})/p^m)_{\Gamma_n} \to \ilim_{n,m} (\underset{w \in S_{\infty}}{\bigoplus} H^1(K_{\infty,w}, E[p^m]))_{\Gamma_n} \xrightarrow{\varphi} \ilim_{n,m} J[p^m]_{\Gamma_n} \to 0
\end{equation}
Now for any pair of maps of abelain groups $A \xrightarrow{\alpha} B \xrightarrow{\beta} C$ it is easy to prove from the snake lemma that we have an exact sequence
\begin{align*}
0 \to \ker(\alpha) &\to \ker(\beta \circ \alpha) \to \ker(\beta) \\
&\to \coker(\alpha) \to \coker(\beta \circ \alpha) \to \coker(\beta) \to 0
\end{align*}
Applying this to the maps
$$\ilim_{n,m} H^1(G_S(K_{\infty}), E[p^m])_{\Gamma_n} \xrightarrow{\psi} \ilim_{n,m} (\underset{w \in S_{\infty}}{\bigoplus} H^1(K_{\infty,w}, E[p^m]))_{\Gamma_n} \xrightarrow{\varphi} \ilim_{n,m} J[p^m]_{\Gamma_n}$$\\
and taking the sequence (\ref{coker_psi_sequence2}) into account we get an exact sequence
$$\ilim_{n,m} (\underset{w \in S_{\infty}}{\bigoplus} E(K_{\infty, w})/p^m)_{\Gamma_n} \to \coker(\psi) \to \coker (\varphi \circ \psi) \to 0$$
Since $J^*$ is a finitely generated $\Lambda$-module, it follows from lemma \ref{inverse_limit_lemma} that $\ilim_{n,m} J[p^m]_{\Gamma_n}$ is a finitely generated $\Zp$-module and so the same is true for $\coker(\varphi \circ \psi)$. Also in the proof of lemma \ref{lambda_lemma} we showed that $\ilim_{n,m} (\underset{w \in S_{\infty}}{\bigoplus} E(K_{\infty, w})/p^m)_{\Gamma_n}$ is finite. Therefore from the above exact sequence $\ilim_{n,m} \coker \psi_{n,m}=\coker \psi$ is a finitely generated $\Zp$-module. It follows that $\img \theta$ is finite since as we showed above it is a torsion $\Zp$-module.
It follows from this, the exact sequence (\ref{final_sequence}) and the observations noted after this sequence that we have an exact sequence
$$0 \to A \to \ilim_{n,m} \ker \psi'_{n,m} \to B \to 0$$
where $A \sim \dot{T}_{\lambda}(\Selinf(E/K_{\infty})^*)$ and $B \sim \dot{T}_{\mu}(\Selinf(E/K_{\infty})^*)$. So we get $\ilim_{n,m} \ker \psi'_{n,m} \sim \dot{T}_{\Lambda}(\Selinf(E/K_{\infty})^*)$. On the other hand $\ilim_{n,m} \ker \psi'_{n,m} = \ilim_{n,m} \Sha^2(G_S(K_n), E[p^m])$ which by the pairing (\ref{pairing2}) defined in the beginning of this section may be identified with the Pontryagin dual of $R_{p^{\infty}}(E/K_{\infty})$. This completes the proof of theorem \ref{main_theorem}.

\textbf{Acknowledgments} The author would like to thank K\k{e}stutis \v{C}esnavi\v{c}ius for many helpful correspondences about some of the arguments in the introduction. The author would also like to thank Robert Pollack for his helpful comments about this paper.

\end{document}